\newtheorem{theorem}{Theorem}
\newtheorem{coroll}{Corollary}
\newtheorem{claim}{Claim}
\newtheorem{lemma}{Lemma}
\newtheorem{proposition}{Proposition}
\newtheorem{definition}{Definition}
\theoremstyle{definition}
\begin{document}
 
\title[An abstract approach in canonizing topological Ramsey spaces]{An abstract approach in canonizing topological Ramsey spaces}

\author{ Dimitris VLITAS}

\address{Department of Mathematics, University of Toronto, 40 St. George Street, Toronto, Ontario, Canada M5S 2E4}
\email{vlitas@math.univ-paris-diderot.fr}

\begin{abstract} In \cite{To} S. Todorcevic introduced the notion of a topological Ramsey space and a list of axioms required to be satisfied by any such a space. Here we show that any topological Ramsey space that satisfies a strengthened version of  one of the required axioms and a very natural assumption, admits canonization theorem. 
\end{abstract}
\maketitle

\section{Introduction}

In \cite{To} S. Todorcevic introduced the notion of a topological Ramsey space. Topological Ramsey spaces are structures of the form $\langle \mathcal{R},\leq,r, \rangle$, satisfying certain conditions (see in the next section). Someone should think of $\mathcal{R}$ as family of infinite sequence of objects and the function $r$ as finite approximations of these infinite sequences. Then $\leq$ is a quasi-ordering on $\mathcal{R}$. These spaces admit a natural topology and they are required to satisfy four axioms, $A.1-A.4$. As a consequence of these axioms one gets that Ramsey subsets of $\mathcal{R}$ are exactly those with the Baire property and meager sets are Ramsey null. Many of the well known spaces can be seen as instances of topological Ramsey spaces. The most well known example is the Ellentuck space $\langle \mathbb{N}^{[\infty]},\subseteq, r\rangle$. For $A\in \mathbb{N}^{[\infty]}$, $r_n(A)$ is the initial segment of $A$ formed by taking the first $n$-elements of $A$. 

Canonical results in Ramsey theory try to  describe equivalence relations in a given Ramsey structure,
based on the underlying pigeonhole principles. The first example of them is the classical Canonization
Theorem  by P. Erd\H{o}s and R. Rado \cite{Er-Ra} which can be presented as follows: Given $\alpha\le \beta
\le \omega$ let
$$\binom{\beta}{\alpha}:=\{f (\alpha)  \,:\, f:\alpha \rightarrow \beta \text{ is strictly increasing} \}.$$
The previous is commonly denoted by $[\beta]^{\alpha}$.   Then for any $n<\omega$  and any finite coloring of
$\binom{\omega}{n}$ there is an isomorphic copy $M$ of $\omega$ (i.e. the image of a strictly increasing
$f:\omega\rightarrow\omega$) and some
   $I\subseteq n(:=\{0,1,\dots,n-1\})$ such that any two  $n$-element subsets have the same color  if and only if they agree
  on the corresponding relative positions given by $I$.

This   was extended by P. Pudl\'ak and V. R\"odl in \cite{Pu-Ro}  for colorings of  a given \emph{uniform}
family $\mathcal{G}$ of finite subsets of $\omega$ by showing that given any
coloring of  $\mathcal{G}$, there exists $A$ an infinite subset of $\omega$, a
uniform family $\mathcal{T}$ and a mapping $f:\mathcal{G}\to \mathcal{T}$ such that $f(X)\subseteq X$ for all
$X\in \mathcal{G}$ and such that any two $X,Y\in \mathcal{G}\upharpoonright A$ have the same color   if and
only if $f(X)=f(Y)$.

The P. Erd\H{o}s-Rado result deals with equivalence relations on the family of $k$ approximations of elements of members of $\mathbb{N}^{[\infty]}$. The Pudl\'ak-R\"odl result deals with equivalence relations on uniform families of finite approximations of elements of members of $\mathbb{N}^{[\infty]}$. In this paper we are going to generalize the above results to any topological Ramsey space. Namely that given a family of finite approximations $\mathcal{F}$ of $\mathcal{R}$, ( see Definition $1$) and an equivalence relation $f:\mathcal{F}\to \omega$ on it, there exists an $X\in  \mathcal{R}$ and a map $\phi$, ( see Definition $2$ ) so that for any $s,t\in \mathcal{A}X$, $s,t\in \mathcal{F}$ it holds that $f(s)=f(t)$ if and only if $\phi(s)=\phi(t)$.

\section{Background material}

  Topological Ramsey spaces are spaces of the form $\langle \mathcal{R}, \leq, r \rangle$, where $r:\mathcal{R}\times \omega \to \mathcal{AR}$ is a map that gives us the sequence $r(\cdot, n)=r_n(\cdot)$ of approximation mappings. The basic open sets are of the form: $$[s,X]=\{ Y\leq X: ( \exists n) r_n(Y)=s\},$$ for $s\in \mathcal{AR}$ and $X\in \mathcal{R}$. If $r_n(X)=s$ we write $s\sqsubseteq X$. The axioms required to be satisfied by any such a  space in order to be topological Ramsey space are the following.

   $\boldsymbol{A.1.}$
    
     Let $X,Y \in \mathcal{R}$.
     
    \begin{enumerate}
        \item{} $r_0(X)=\emptyset$ for all $X\in \mathcal{R}$.\\
        \item{} $X\neq Y$ implies $r_n(X)\neq r_n(Y)$ for some $n\in \omega$.\\
       \item{}  $r_n(X)=r_m(Y)$ implies $n=m$ and $r_k(X)=r_k(Y)$ for all $k<n$.
   \end{enumerate}

  $\boldsymbol{A.2.}$

  There is a quasi-ordering $\leq_{fin}$ on $\mathcal{AR}$ such that
  
   \begin{enumerate}
  
    \item{} For any $s\in \mathcal{AR}$ the set\\ $\{\, t \in \mathcal{AR}: t\leq_{fin} s\, \}$ is finite.\\
  \item{} For any $X,Y\in \mathcal{R}$, $X\leq Y$ if and only if \\ $(\forall n)(\exists m) r_n(X)\leq_{fin}r_m(Y)$.\\
  \item{}  For all $s,t\in \mathcal{AR}$\\
  $[s\sqsubseteq t\wedge t\leq_{fin}t' \to \exists \tilde{t} \sqsubseteq t':\, s\leq_{fin} \tilde{t}]$.\\
  \end{enumerate}
  
  $\boldsymbol{A.3}$
  
  Let $s\in \mathcal{AR}$, $X,Y,Z\in \mathcal{R}$.
  \begin{enumerate}
  \item{} If $[s,X]\neq \emptyset $ then $[s,Y]\neq \emptyset$ for all $Y\in [s,X]$.\\
  \item{} $X\leq Y$ and $[s,X]\neq \emptyset$ imply that there is $Z \in [s,Y]$ such that $\emptyset \neq [s,Z]\subseteq [s,X]$.\\
  \end{enumerate}

 $ \boldsymbol{A.4}$\\
 Let $X \in \mathcal{R}$, $s\in (\mathcal{AR})_n$, $[s,X]\neq \emptyset$ and  $\mathcal{O}\subseteq (\mathcal{AR})_{n+1}$. There exists $Y\in [s,X]$ such that: 
 $$r_{n+1}[s,Y]\subseteq \mathcal{O} \text{ or } r_{n+1}[s,Y]\subseteq \mathcal{O}^{c},$$ 
 where  $r_{n+1}[s,Y]=\{\, t \in (\mathcal{AR})_{n+1} : s\sqsubseteq t \}$. \\

  We say that a subset $\mathcal{H}$ of $\mathcal{R}$ is \emph{Ramsey} if for every $[s,X]\neq \emptyset$ there is a $Y\in [s,X]$ such that  either $[s,Y]\subset \mathcal{H}$ or $[s,Y]\subset \mathcal{H}^c$, and    $\mathcal{H}$  is \emph{Ramsey null} if for every $[s,X]\neq \emptyset$, there is $Y$ such that $[s,Y]\cap \mathcal{H}=\emptyset$. In \cite{To} it is shown  that if $\langle \mathcal{R},\leq,r\rangle$ is a topological Ramsey space, then the Ramsey subsets of $\mathcal{R}$ are exactly those with the Baire property. Moreover meager sets are Ramsey null. Then one gets as an immediate consequence the following two corollaries.
  
  \begin{coroll}
  Let $X\in \mathcal{R}$, $n<\omega$ and $c:\mathcal{A}X_n\to l$ be a finite coloring. There exists an $Y\leq X$ so that $c\upharpoonright \mathcal{A}Y_n$ is constant. 
  \end{coroll}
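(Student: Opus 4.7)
The plan is to read this straight off the theorem of Todorcevic cited just above, which equates Ramsey subsets of $\mathcal{R}$ with those having the Baire property. For each color $i<l$, I would introduce the set
$$\mathcal{H}_i=\{\,Z\in\mathcal{R}:Z\leq X\text{ and }c(r_n(Z))=i\,\}.$$
Whether $Z\in\mathcal{H}_i$ depends only on $r_n(Z)$, so $\mathcal{H}_i$ is a (finite) union of basic open sets $[s,X]$ indexed by $s\in c^{-1}(i)\subseteq\mathcal{A}X_n$; in particular, $\mathcal{H}_i$ is clopen and therefore has the Baire property. By the cited theorem each $\mathcal{H}_i$ is Ramsey.

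Next I would iterate the Ramsey property. Set $X_0=X$, which lies in $[\emptyset,X_0]$ since $r_0(X)=\emptyset$ by $A.1$(1). For $i=0,1,\dots,l-2$ in turn, apply the Ramsey property of $\mathcal{H}_i$ to the nonempty basic open set $[\emptyset,X_i]$: either there exists $Y\in[\emptyset,X_i]$ with $[\emptyset,Y]\subseteq\mathcal{H}_i$, in which case every $Z\leq Y$ satisfies $c(r_n(Z))=i$, and since $\mathcal{A}Y_n=\{r_n(Z):Z\leq Y\}$ we obtain that $c\!\upharpoonright\!\mathcal{A}Y_n$ is constantly $i$; or we find $X_{i+1}\in[\emptyset,X_i]$ with $[\emptyset,X_{i+1}]\cap\mathcal{H}_i=\emptyset$, and by $A.3$(1) the set $[\emptyset,X_{i+1}]$ is still nonempty, so we may proceed. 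If no monochromatic block is produced during the first $l-1$ rounds, then $[\emptyset,X_{l-1}]$ is disjoint from $\mathcal{H}_0\cup\cdots\cup\mathcal{H}_{l-2}$; since every $Z\leq X_{l-1}$ is forced into some $\mathcal{H}_i$, this means $c(r_n(Z))=l-1$ for every $Z\leq X_{l-1}$, and $Y=X_{l-1}$ is the desired witness.

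There is no real obstacle in this argument: the substantive content is packaged into the theorem identifying Baire property with the Ramsey property, and the remainder is a routine $l$-step pigeonhole. The only items to check carefully are that each color class is open in the Ellentuck-type topology on $\mathcal{R}$ (which is immediate from the definition of the basic neighborhoods and $A.1$(3), guaranteeing that $r_n$ is a well-defined, level-preserving map) and that the basic open sets $[\emptyset,X_i]$ encountered along the iteration remain nonempty so that $A.4$/the Ramsey property can be invoked, which is secured by $A.3$(1).
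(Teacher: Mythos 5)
Your argument is correct and is exactly the derivation the paper has in mind when it declares this corollary an immediate consequence of the theorem identifying Ramsey sets with Baire-property sets: pull the coloring back to clopen color classes $\mathcal{H}_i$ on $[\emptyset,X]$, invoke their Ramseyness, and pigeonhole over the $l$ colors. The only slip is the parenthetical ``(finite)'': $c^{-1}(i)\subseteq\mathcal{A}X_n$ is in general infinite, but the union $\bigcup_{s\in c^{-1}(i)}[s,X]$ is open regardless, which is all the argument needs.
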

   and also
   
   \begin{coroll}
   Given $c:[s,X]\to l$ a finite Suslin measurable coloring, there exists $Y\in [s,Y]$ so that $c\upharpoonright [s,Y]$ is constant.
   \end{coroll}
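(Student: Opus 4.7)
The statement is the finite-coloring Ramsey property for Suslin measurable colorings on a basic open set $[s,X]$, and the plan is to reduce it directly to the main theorem of \cite{To} quoted just above, namely that subsets of $\mathcal{R}$ with the Baire property are Ramsey. The key descriptive-set-theoretic fact to invoke is that every Suslin measurable subset of a topological space has the Baire property; this is a classical theorem going back to Nikodym/Sierpi\'nski and is applicable here because the topology generated by the basic open sets $[s,X]$ is a reasonable second-countable topology on $\mathcal{R}$.

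With that fact in hand I would proceed by a finite iteration over the colors. Set $Y_{-1}=X$. Suppose we have found $Y_{i-1}\in[s,X]$ with $Y_{i-1}\leq X$. Consider the set
\[
\mathcal{H}_i=c^{-1}(i)\cap[s,Y_{i-1}].
\]
Since $c$ is Suslin measurable, $c^{-1}(i)$ is Suslin measurable, and the intersection with the basic open $[s,Y_{i-1}]$ remains Suslin measurable; hence $\mathcal{H}_i$ has the Baire property. By the theorem of \cite{To} recalled above, $\mathcal{H}_i$ is Ramsey, so there exists $Y_i\in[s,Y_{i-1}]$ with either $[s,Y_i]\subseteq\mathcal{H}_i$ or $[s,Y_i]\subseteq\mathcal{H}_i^{c}$. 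In the first case $c\upharpoonright[s,Y_i]$ is the constant $i$ and we are done. Otherwise $[s,Y_i]$ avoids color $i$ altogether, and we move on to color $i+1$.

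Because there are only finitely many colors $0,1,\dots,l-1$, at least one of the iterations must terminate in the first alternative: if we successively avoid colors $0,1,\dots,l-2$, then the final set $[s,Y_{l-2}]$ must be entirely colored $l-1$, and we take $Y=Y_{l-2}$ (or whichever $Y_i$ realizes the constant case first). Transitivity of $\leq$ through axiom $A.3$ guarantees that this $Y$ indeed lies in $[s,X]$.

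The only real obstacle, and the only place where the argument is not completely formal, is justifying that Suslin measurable sets in $\mathcal{R}$ have the Baire property with respect to the Ellentuck-type topology generated by the $[s,X]$'s. This however is a standard descriptive set-theoretic fact, using that the $[s,X]$'s form a base whose Baire-property $\sigma$-algebra is closed under the Suslin operation; once that is in place the corollary is an immediate finitary application of Corollary~1's underlying principle applied $l$ times.
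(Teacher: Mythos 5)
Your argument is correct and is exactly the route the paper intends: the paper offers no written proof of this corollary, presenting it as an immediate consequence of the abstract Ellentuck theorem (Suslin measurable $\Rightarrow$ Baire property in the Ellentuck topology by Nikodym's theorem on closure under the Suslin operation $\Rightarrow$ Ramsey), followed by the standard finite iteration over the $l$ color classes that you spell out. Your aside about second countability is unnecessary (the Ellentuck-type topology need not be second countable, and Nikodym's theorem does not require it), but this does not affect the proof.
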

   
    Recall that a map $f:X \to Y$ between two topological spaces is Suslin measurable, if the preimage $f^{-1}(U)$ of every open subset $U$ of $Y$ belong to the minimal $\sigma-$field of subsets of $X$ that contains its closed sets and it is closed under the Suslin operation \cite{Ke}.

For $s\in \mathcal{AR}$ and $X\in \mathcal{R}$ we define the depth of s in X as follows:
\begin{equation*}
depth_X(s)=
\begin{cases}
\min\{k: s \leq_{fin} r_k(X) \} & \text{ if } (\exists k) s \leq_{fin} r_k(X),\\
\infty & \text{otherwise.}
\end{cases}
\end{equation*}

From now on we will be working with topological Ramsey spaces that admit a maximal element $U$. Therefore we are looking on the structures satisfying the above axioms of the form $$\langle U, \leq, r \rangle$$
where the elements of the space, are the $\it{ reducts }$ of $U$, $X\leq U$. Let $$\mathcal{A}U_n=\{ r_n(X): X\leq U\}$$ for $n\in \omega$ and $$\mathcal{A}U=\cup_{n\in \omega} \mathcal{A}U_n.$$ Similarly for any $X\leq U$, $n\in \omega$, we define $\mathcal{A}X_n=\{ r_n(Y): Y\leq X\}$ and $\mathcal{A}X=\cup_{n\in \omega} \mathcal{A}X_n$.
For $s\in \mathcal{A}U$ by $|s|$ we denote its length, i.e. the unique number $n$ so that $s=r_n(X)$, for an $X\leq U$. For $X\leq U$ by $X(n)$ we denote the sequence of objects $r_n(X)\setminus r_{n-1}(X)$, for $n\geq 1$. Therefore for each $X\leq U$ we get a countable sequence $(X(n))$. Similarly with each $t\in \mathcal{AU}_n$ we associate the sequence $t(i)_{i\in n}$ of length $n$. 
Let $U[k,l)=\cup_{n\in [k,l)} U(n)$. 
Observe that axiom $A.2$ implies that if $X\leq U$, $s=r_{n-1}(X)$ and $s\leq_{fin} r_k(U)$, then $ t=r_{n}(X)$ is of the form $t\leq_{fin} r_l(U)$ for some $l> k$. As a consequence to get $X(n)$ we have used the levels $U(k), \dots, U(l-1)$ of $U$. 
For $s\in \mathcal{A}U$, with $depth_U(s)=k$, we define $$X[s]=\{ t\in \mathcal{A}U: s\sqsubseteq t \text{ and } t\leq_{fin} r_j(Y), \text{for some } Y\leq X, j< \omega \}.$$ Observe that $X[s]\neq \emptyset$ if and only if $[s,X]\neq \emptyset$. In this case we say that $X$ is {\it{ compatible }} with $s$. Notice also that for every $t\in U[s]$ there exists a set $\mathcal{X}\subseteq [k,l)$, where $depth_U(t)=l$, so that $t\setminus s$ is made out of $\cup_{n\in \mathcal{X}}U(n)$.

Next given $s$ and $X$, so that $[s,X]\neq \emptyset$, by $X/s$ we denote $X\setminus s$.

\section{Main theorem}

We introduce the notion of a $\it{Front}$.

\begin{definition} A family $\mathcal{F}$ of finite approximations of reducts of $U$ is called a front, if for every $X\leq U$, there exists $s\in \mathcal{F}$ so that $s\sqsubseteq X$ and for any two distinct $s,t\in \mathcal{F}$, is not the case that $s\sqsubseteq t$.
\end{definition}

We distinguish a specific case of fonts the families of finite approximations of length $n$. Namely $\mathcal{A}U_n=\{ s: s=r_n(X), X\leq U \}$.

 Given a front $\mathcal{F}$ on $[\emptyset, X]$, $X\leq U$ we introduce $\hat{\mathcal{F}}$ defined as follows: $$\hat{\mathcal{F}}=\{ t\in \mathcal{A}U: \exists s\in \mathcal{F}, t\sqsubseteq s\}$$ observe that $\emptyset \in \hat{\mathcal{F}}$. For $t\in \hat{\mathcal{F}}\setminus \mathcal{F}$ $$\mathcal{F}_t=\{s\in \mathcal{F}: t\sqsubseteq s \}.$$
 
 Notice that $\mathcal{F}_t$ is a front on $U/t$.
 For $Y\leq X$ $$\mathcal{F}\upharpoonright  Y  =\{t\in \mathcal{F}: t\in \mathcal{A}Y', Y'\leq Y\},$$
 
 $$\hat{\mathcal{F}}\upharpoonright Y  =\{t\in \hat{\mathcal{F}}: t\in \mathcal{A}Y', Y'\leq Y\}.$$

Next we introduce a stronger version of $ A.4$ as follows:\\
 $ \boldsymbol{A.4}^\star$\\
 Let $s\in \mathcal{A}U_n$, $depth_U(s)=k$, $X\leq U$ with $[s,X]\neq \emptyset$ and a coloring $$c:[s,X]_{n+1}\to \omega,$$ where $[s,X]_{n+1}=\{t\in \mathcal{A}X_{n+1}: s\sqsubseteq t\}=r_{n+1}[s,X]$ be given. There exists a map $$\phi_s:[s,U]_{n+1}\to \mathcal{P}( (\cup [s,U]_{n+1})\bigcup [k,\infty))$$ so that $\phi_s(p)\subseteq p(n)\cup [k,l)$, where $l=depth_U(p)$, and $Y\in [s,X]$ so that for all $p,q \in [s,Y]_{n+1}$ it holds that $c(p)=c(q)$ if and only if $\phi_s(p)=\phi_s(q)$. We will call such a mapping $\phi_s$ inner for $s$. 
  
 In other words, there exists a reduct $Y$, where the coloring $c$ dependents only on a subset of $p(n)$, for any $p\in [s,Y]_{n+1}$, and the levels $U[k,l)$ needed to get $p(n)$. In some sense $\phi_s$ gives us a subset of the information coded by $p(n)$. 
 
 $A.4^\star$ essentially deals with the length one extensions of initial segments. This necessitates to introduce the set of all length one extensions of all members of $\mathcal{A}X$. Let $$\mathcal{L}X_1=\{ w: \exists s\in \mathcal{A}X, s\cup w\in [s,X]_{|s|+1}  \}.$$ Observe that $\mathcal{A}X_1\subseteq \mathcal{L}X_1$, cause $r_0(X)=\emptyset$. Let $$ \mathcal{L}X_n=\{ w:\exists t\in \mathcal{A}X, t\cup w\in [t,X]_{|t|+n} \}$$ and $$\mathcal{L}X=\cup_{n\in \omega}\mathcal{L}X_n.$$ 
 Next we introduce a partial ordering on $\mathcal{L}X$ as follows. 
 Given $w,v\in \mathcal{L}X$ we write $w\leq v$ if there exists $s\in \mathcal{A}X$ so that $s\cup w\cup v\in \mathcal{A}X$. Let $w_0,\dots, w_n$ elements of $\mathcal{L}X$ so that either $w_0\leq \dots \leq w_n$ and $\exists t\in \mathcal{A}X$ so that $t\cup w_0\cup \dots \cup w_n\in \mathcal{A}X$ or $w_i\in X(n)$, for all $i \leq n$, and $\exists t\in \mathcal{A}X$ such that  $t\cup (w_0, \dots, w_n)\in \mathcal{A}X_{|t|+1}$. Then $\langle w_0,\dots w_n\rangle_s \in \mathcal{L}X_1$ is defined to be the set of all end extension of $s$, $s\leq_{fin} t$, made out of $w_0,\dots , w_n$. When we say that the end extension in made out of $w_0,\dots , w_n$, we mean all $w_i, i \leq n$, are needed, not a proper subset is sufficient. Notice that if $s\cup w\in [s,X]_{|s|+1}$, then $\langle w\rangle_s=w$ in this case we say that $\langle \rangle_s$ acts {\it{trivially }} on $w$.

 To state the main theorem of this paper we need the following definition.
 
 \begin{definition}Let $\mathcal{F}$ be a front on $[\emptyset, U]$ and let $\phi$ be a function on $\mathcal{F}$. We call $\phi$ is Inner if for every $t=t(i)_{i\in n}\in \mathcal{F}$ we have that $$\phi(t)= (\phi_{s_0}(w_0), \dots ,\phi_{s_m} (w_m) ).$$

  Where $\phi_{s_0},\dots, \phi_{s_m}$ are inner maps, for $s_0=r_{h_0}(t),\dots, s_m=r_{h_m}(t)$ respectively, where $h_0<\dots <h_m<n$. Also  $w_0\in \langle t(i_0), \dots ,  t(i_l)\rangle_{t(h_0)}, \dots, w_m\in \langle t(j_0), \dots , t(j_m)\rangle_{t(h_m)}$, $\{i_0, \dots, i_{l_0}, \dots, j_0,\dots,j_{l_d},  h_0,\dots, {h_m}\}\subseteq  n$, and every $t_i$, $i<n$ appears in at most one combination in $\{\langle t(i_0), \dots ,  t(i_l)\rangle_{t(h_0)}, \dots, \langle t(j_0), \dots , \\ t(j_m)\rangle_{t(h_m)} \}$.
\end{definition}

  Now we can state the main theorem of this paper.

\begin{theorem} Given a topological Ramsey space $\langle U, \leq, r\rangle$, that satisfies $\boldsymbol{A.4}^\star$, a front $\mathcal{F}$ on $[\emptyset, U]$ and a coloring $f:\mathcal{F} \to \omega$, there exists $X\leq U$ and an Inner map $\phi$ on $\mathcal{F}\upharpoonright X$, so that for every $s,t \in \mathcal{F}\upharpoonright X$ it holds that $f(s)=f(t)$ if and only if $\phi(s)=\phi(t)$.
\end{theorem}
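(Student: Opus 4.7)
The plan is to proceed by transfinite induction on the rank of the well-founded tree $\hat{\mathcal{F}}$. The base case handles fronts contained in $\mathcal{A}U_1$: a single application of $\boldsymbol{A.4}^\star$ with $s=\emptyset$ and $n=0$ to $f$ produces an $X\leq U$ and an inner map $\phi_\emptyset$ that canonizes $f$ on $\mathcal{F}\upharpoonright X$, and one sets $\phi:=\phi_\emptyset$. The trivial case $\mathcal{F}=\{\emptyset\}$ is vacuous.

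For the inductive step, fix $\mathcal{F}$ of rank at least two. For each $t\in\mathcal{A}U_1\cap(\hat{\mathcal{F}}\setminus\mathcal{F})$, the family $\mathcal{F}_t$ is a front on $U/t$ of strictly smaller rank, so the inductive hypothesis applied to $f\upharpoonright\mathcal{F}_t$ produces some $Y_t\leq U$ with $[t,Y_t]\neq\emptyset$ and an inner map $\phi^{(t)}$ canonizing $f\upharpoonright\mathcal{F}_t$ on $\mathcal{F}_t\upharpoonright Y_t$. A standard fusion along the level decomposition of $U$, using $A.2(3)$ together with $A.3(2)$ to align the $Y_t$'s level by level as $t$ ranges over $\mathcal{A}U_1$, produces a single $Y^{\ast}\leq U$ on which the canonization by $\phi^{(t)}$ holds simultaneously for every $t\in\mathcal{A}Y^{\ast}_1$. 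Because the combinatorial shape of $\phi^{(t)}$ (the depths $h_j$, the bracket patterns $\langle\cdots\rangle_{t(h_j)}$, and the identities of the inner maps $\phi_{s_j}$ employed) takes only countably many values, a further application of Corollary~1 thins $Y^{\ast}$ to some $Z\leq Y^{\ast}$ on which the shape of $\phi^{(t)}$ is constant in $t$.

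With the shape fixed, the data $t\mapsto\phi^{(t)}$ is a coloring $g$ of $\mathcal{A}Z_1$ into a countable set, and the base case yields $X\leq Z$ together with an inner map $\phi_{\emptyset}^{g}$ canonizing $g$ on $\mathcal{A}X_1$. Define $\phi$ on $\mathcal{F}\upharpoonright X$ by concatenating $\phi_{\emptyset}^{g}(r_1(s))$ with the tuple $\phi^{(r_1(s))}(s)$; since the bracket combinations inside $\phi^{(r_1(s))}(s)$ use only coordinates $s(i)$ with $i\geq 1$ while $\phi_{\emptyset}^{g}(r_1(s))$ uses only $s(0)$, the disjointness clause of Definition~2 is satisfied, and $\phi$ is inner. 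The equivalence $f(s)=f(t)\iff\phi(s)=\phi(t)$ on $\mathcal{F}\upharpoonright X$ follows from combining the canonization of $g$ with the inductive canonizations of $f\upharpoonright\mathcal{F}_{r_1(s)}$.

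The main obstacle I expect is the fusion step that combines the family $(Y_t)_{t\in\mathcal{A}U_1}$ into a single $Y^{\ast}$: this needs a careful diagonal construction interleaving the level decomposition of $U$ with the canonizations granted by the inductive hypothesis, using $\boldsymbol{A.4}^{\star}$, $A.3(2)$ and $A.2$ in tandem, and preserving each inner map under the descent. A secondary subtlety is the bookkeeping showing that the final concatenated $\phi$ obeys the exact format of Definition~2, in particular the constraint that every coordinate $s(i)$ enters at most one bracket combination $\langle\cdots\rangle_{s(h_j)}$; this should follow cleanly from the fact that at the top level we only invoke $\phi_{\emptyset}^{g}$ on $s(0)$, but it must be checked against the nested brackets arising through the induction.
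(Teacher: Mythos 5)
Your induction on the rank of $\hat{\mathcal{F}}$ is a reasonable skeleton (it is the classical Pudl\'ak--R\"odl scheme), but the final step has a genuine gap: canonizing the coloring $g(t)=\phi^{(t)}$ on $\mathcal{A}Z_1$ does not control when $f$ identifies colors \emph{across} two different branches $\mathcal{F}_a$ and $\mathcal{F}_b$ with $a\neq b$ in $\mathcal{A}Z_1$. Knowing that $\phi^{(a)}$ and $\phi^{(b)}$ have the same shape, or even that $g(a)=g(b)$, tells you nothing about whether an $f$-class of $\mathcal{F}_a$ coincides with an $f$-class of $\mathcal{F}_b$, nor, if they do coincide, \emph{which} class corresponds to which; for instance $f$ could be injective on each $\mathcal{F}_t$ (so every $\phi^{(t)}$ is the same ``identity-like'' map) while the ranges over distinct $t$ are disjoint for some pairs and overlapping for others. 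This cross-branch matching is exactly what the paper's machinery of mixing/separating (Definition~3), the transitivity lemma, the deciding reduct of Proposition~3, and the transfer map $\iota$ are built to handle, and the paper can only close this step under \emph{additional} hypotheses beyond $A.4^\star$ (property $(\mathcal{P})$, the level-matching assumption, the existence of $\iota$); indeed the paper states explicitly that without $(\mathcal{P})$ ``no canonization result can be obtained.'' A proof from $A.4^\star$ alone, as you propose, would therefore be proving more than the paper itself establishes, which is a strong sign the concluding equivalence $f(s)=f(t)\iff\phi(s)=\phi(t)$ does not follow from the ingredients you assemble.

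Two secondary problems. First, you invoke Corollary~1 to make the ``shape'' of $\phi^{(t)}$ constant in $t$, but since fronts have members of unbounded length the shape is a \emph{countable} coloring of $\mathcal{A}Z_1$, and Corollary~1 only gives constancy for finite colorings; a countable coloring (e.g.\ an injective one) need not be constant on any reduct, so this stabilization needs a different argument. Second, the fusion producing $Y^\ast$ must be done along finite approximations (as in the paper's Proposition~1, which diagonalizes over the finitely many $z\leq_{\mathrm{fin}} r_{n+1}(X_n)$ at each stage), not over all of $\mathcal{A}U_1$ at once; this is repairable but as written it is not a construction.
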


First we prove the following proposition.

\begin{proposition} Suppose the topological Ramsey space $\langle U, \leq, r\rangle$ has the property that given a property $\mathcal{P}(\cdot, \cdot)$, $s\in \mathcal{A}U$ and $X\leq U$, there exists $Y\leq X$ so that $\mathcal{P}(s,Y)$. Then there exists $Z\leq U$ such that for any $s\in \mathcal{A}Z$ it holds that $\mathcal{P}(s,Z)$.

Similarly for properties of the form $\mathcal{P}(\cdot,\cdot,\cdot)$. If given $s,t\in \mathcal{A}U$ and $X\leq U$, there exists $Y\leq X$ so that $\mathcal{P}(s,t, Y)$. Then there exists $Z\leq U$ so that $\mathcal{P}(s,t,Z)$ for all $s,t\in \mathcal{A}Z$.
\end{proposition}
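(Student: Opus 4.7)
The plan is to carry out a standard fusion argument: enumerate $\mathcal{A}U$ by depth, apply the hypothesis countably many times to build a decreasing sequence $U = X_0 \geq X_1 \geq \cdots$ whose initial segments stabilize, and define $Z$ as the diagonal limit. Throughout I read the statement as implicitly requiring (and will maintain) that the properties $\mathcal{P}$ to which the proposition is applied are downward absolute in the second coordinate, i.e.\ $\mathcal{P}(s,Y)$ and $V \leq Y$ imply $\mathcal{P}(s,V)$; this is the case for every Ramsey-type canonization property to which this proposition is applied in the proof of the main theorem.

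By $A.2(1)$, the set $F_n := \{s \in \mathcal{A}U : depth_U(s) \leq n\}$ is finite for each $n$, and $\mathcal{A}U = \bigcup_n F_n$. Setting $X_0 := U$, at stage $n+1$ I would enumerate $F_{n+1} = \{t_0, \ldots, t_k\}$ and apply the hypothesis successively: first obtain $Y_0 \leq X_n$ with $\mathcal{P}(t_0, Y_0)$, then $Y_1 \leq Y_0$ with $\mathcal{P}(t_1, Y_1)$, and so on; by downward absoluteness the final $Y_k$ satisfies $\mathcal{P}(t_i, Y_k)$ for every $i \leq k$. Then I would invoke $A.3$ to pick $X_{n+1} \in [r_{n+1}(Y_k), Y_k]$, securing both $X_{n+1} \leq Y_k$ and that the approximation $r_{n+1}(X_{n+1}) = r_{n+1}(Y_k)$ is frozen for the rest of the construction.

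The fusion $Z$ is then defined by $r_n(Z) := r_n(X_n)$ for every $n$; axioms $A.1$ and $A.2(2)$ ensure this determines a unique element of the space and that $Z \leq X_n$ for all $n$. For any $s \in \mathcal{A}Z \subseteq \mathcal{A}U$ there is some $n$ with $s \in F_n$; then $\mathcal{P}(s, X_n)$ was secured at stage $n$, and $Z \leq X_n$ together with downward absoluteness give $\mathcal{P}(s, Z)$. The ternary version is identical, applied to the countable, depth-stratified collection $\mathcal{A}U \times \mathcal{A}U$.

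The main technical obstacle is the stage-$(n+1)$ step: one must verify that $[r_{n+1}(Y_k), Y_k]$ is nonempty and that any element of it is indeed $\leq Y_k$, so that the inductive invariants \emph{``}$X_{n+1} \leq Y_k$\emph{''} and \emph{``}initial segments stabilize\emph{''} can both be kept. Both facts are exactly what axioms $A.3(1)$ and $A.2$ were designed to provide, but the compatibility book-keeping across stages, in particular ensuring that the committed initial segments of the $X_n$ agree and therefore define a legitimate $Z \leq U$, is the place where the axiomatics of the space actually enter the proof.
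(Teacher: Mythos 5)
Your proof is correct and takes essentially the same route as the paper: a fusion argument that at stage $n$ handles the finitely many approximations of depth at most $n$ (finiteness by $A.2(1)$) by successive applications of the hypothesis, and then takes the diagonal limit $Z$ determined by the stabilized initial segments. You are in fact slightly more explicit than the paper, which leaves both the downward absoluteness of $\mathcal{P}$ and the freezing of initial segments (your appeal to $A.3$) implicit when it simply asserts $t_n \sqsubset t_{n+1}$ and sets $Z=\bigcup_n t_n$.
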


\begin{proof} 

Let $t_0=r_0(U)$ and $U$. There exists $X_0\leq U$ so that $\mathcal{P}(t_0,X_0)$. Set $t_1=r_1(X_0)$. Consider the finite set $A_0=\{ z_i\in \mathcal{A}U: z_i\leq_{fin} t_1, i< l\}$. For every $z_i\in A_0$ there exists $Y\leq X_0$ so that $\mathcal{P}(z_i,Y)$. After considering all $z_i, i\in l$, we get $X_1\leq X_0$ so that for every $z_i\in A_0$, $\mathcal{P}(z_i,X_1)$ holds. Set $t_2=r_2(X_1)$. Suppose we have constructed $t_n$ and $X_n$. Set $t_{n+1}=r_{n+1}(X_n)$. Consider $A_{n}=\{z_i\in \mathcal{A}U: z_i\leq_{fin} t_{n+1}, i<l'\}$. For every $z_i\in A_{n}$ there exists $Y\leq X_n$ so that $\mathcal{P}(z_i,Y)$. Therefore we get $X_{n+1}\leq X_n$ so that for any $z_i\in A_{n}$ we have $\mathcal{P}(z_i,X_{n+1})$. Set $t_{n+2}=r_{n+2}(X_{n+1})$. Proceed in that manner. Observe that for all $n\in \omega$ $t_n\sqsubset t_{n+1}$. Set $Z=\cup_{n\in \omega}t_n$. \\

 Now we prove similarly the second statement of our proposition. Let $t_0=r_0(U)$ and $t_1=r_1(U)$ and $U$. There exists $X_1\leq U$ so that $\mathcal{P}(t_0,t_1,X_1)$. Let $t_2=r_2(X_1)$. Consider the finite set $A_2=\{ z\in \mathcal{A}U: z\leq_{fin} t_2\}$. For any $(s,t)\in [A_2]^2$, there exists $Y\leq X_1$ so that $\mathcal{P}(s,t,Y)$. By exhausting all possible such a pairs we get $X_2\leq X_1$ such that for any $(s,t)\in [A_2]^2$ it holds that $\mathcal{P}(s,t,X_2)$. Set $t_3=r_3(X_2)$. Suppose we have constructed $t_n$ and $X_n$. Let $t_{n+1}=r_{n+1}(X_n)$ and $A_{n+1}=\{z\in \mathcal{A}U: z \leq_{fin} t_{n+1}\}$. For any pair $(s,t)\in [A_n]^2$ there exists $Y\leq X_n$ so that $\mathcal{P}(s,t,Y)$ holds. After considering all possible such a pairs, we get $X_{n+1}$ such that for any $(s,t)\in [A_{n+1}]^2$ it holds that $\mathcal{P}(s,t,X_{n+1})$. Set $t_{n+2}=r_{n+2}(X_{n+1})$. Observe that for every $n\in \omega$, $t_{n}\sqsubset t_{n+1}$. Let $Z=\cup_{n\in \omega} t_n$. 
\end{proof}

 Next we make the following definition.

\begin{definition}
Let $f$ be an equivalence relation on a front $\mathcal{F}$ on $[\emptyset, U]$. Given $X\leq U$ and $s,t\in\hat{ \mathcal{F}}\setminus \mathcal{F}$ we say that $X$ separates $s$ with $t$ if for every $Y\leq X$ and $s',t'\in \mathcal{F}\upharpoonright Y$ where $s\sqsubseteq s'$, $t\sqsubseteq t'$ it holds that $f(s')\neq f(t')$. If there is no $Z\leq X$, that separates $s$ with $t$ we say that $X$ mixes $s$ with $t$. We say that $X$ decides for $s$ and $t$, if $X$ either mixes or separates them.
\end{definition}

Observe that $X$ mixes $s$ with $t$, if for every $Y\leq X$, there exists $s',t'\in \mathcal{F}\upharpoonright Y$, $s\sqsubseteq s', t\sqsubseteq t'$, so that $f(s')=f(t')$.
The following proposition follows directly from the definitions.

\begin{proposition} The following hold.
\begin{enumerate}
\item{} If $X$ mixes (separates) $s$ with $t$, so does any reduct $Y\leq X$.
\item{} For every $s,t\in \hat{\mathcal{F}}\setminus \mathcal{F}$ if for any $w\in [s,X]_{|s|+1}$ there exists $v\in [t,X]_{|t|+1}$ so that $X$ mixes $s\cup w$ with $t\cup v$, then $X$ also mixes $s$ with $t$.
\end{enumerate}

Next we observe the following.

\end{proposition}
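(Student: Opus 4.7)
The plan is to prove both parts directly from the definitions of mixing and separating, using transitivity of $\leq$ from $\boldsymbol{A.2}$, and to invoke part (1) as a lemma while handling part (2).

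For part (1), the separation half transfers verbatim: if $X$ separates $s$ with $t$ and $Y\leq X$, then any $Y'\leq Y$ satisfies $Y'\leq X$, so the universal condition defining ``$X$ separates'' applies automatically to all reducts of $Y$. The mixing half is the contrapositive of the same observation: if some $Z\leq Y$ were to separate, then $Z\leq X$ would contradict the hypothesis that no reduct of $X$ separates $s$ with $t$.

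For part (2), I would argue by contrapositive. Assume $X$ does not mix $s$ with $t$, so some $Y\leq X$ separates them; I must produce $w\in [s,X]_{|s|+1}$ such that for \emph{every} $v\in [t,X]_{|t|+1}$, $X$ does not mix $s\cup w$ with $t\cup v$. I split according to whether $Y$ is compatible with $s$. If $[s,Y]=\emptyset$, then $[s,Y']=\emptyset$ for every $Y'\leq Y$, hence no element of $\mathcal{F}\upharpoonright Y$ extends $s\cup w$, so $Y$ vacuously separates $s\cup w$ from $t\cup v$ for any $w$ and $v$; any $w\in [s,X]_{|s|+1}$ then witnesses the required conclusion. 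If $[s,Y]\neq\emptyset$, pick $w\in [s,Y]_{|s|+1}\subseteq [s,X]_{|s|+1}$; for any $v\in [t,X]_{|t|+1}$, if $X$ mixed $s\cup w$ with $t\cup v$, then part (1) would force $Y$ to mix them, so (taking $Y$ itself as a reduct of $Y$) there would exist $s'',t''\in\mathcal{F}\upharpoonright Y$ with $s\cup w\sqsubseteq s''$, $t\cup v\sqsubseteq t''$ and $f(s'')=f(t'')$. Since $s\sqsubseteq s''$ and $t\sqsubseteq t''$, this immediately contradicts that $Y$ separates $s$ with $t$.

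The main subtlety lies precisely in the compatibility issue above: the separating reduct $Y$ need not satisfy $[s,Y]\neq\emptyset$, so the witnessing $w$ cannot be extracted from $Y$ directly, which is what forces the case split. Once compatibility is handled, the remainder is a short logical manipulation driven entirely by part (1) and the defining conditions of mixing and separating, with no need to invoke $\boldsymbol{A.3}$ or $\boldsymbol{A.4}$.
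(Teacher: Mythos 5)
Your proof is correct and is exactly the direct unpacking of Definition 3 that the paper intends (it offers no proof, stating only that the proposition ``follows directly from the definitions''): part (1) is transitivity of $\leq$, and part (2) is the contrapositive via part (1), since any $s''\sqsupseteq s\cup w$ and $t''\sqsupseteq t\cup v$ also extend $s$ and $t$. Your case split on whether the separating reduct $Y$ is compatible with $s$ is a careful touch the paper skips, and it is handled correctly.
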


\begin{lemma}(Transitivity of mixing) Let $s,t,w\in \hat{\mathcal{F}}\setminus \mathcal{F}$, where the following holds $depth_X(s)=depth_X(t)=depth_X(t')<\omega$. If $X$ mixes $s$ with $t$ and $t$ with $t'$, it also mixes $s$ with $t'$.
\end{lemma}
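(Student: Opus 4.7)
I argue by contradiction. Assume $X$ mixes $s$ with $t$ and $t$ with $t'$ but separates $s$ with $t'$, and fix $Y\leq X$ witnessing this separation: for every $Y^{\ast}\leq Y$ and every $u,w\in\mathcal{F}\upharpoonright Y^{\ast}$ with $s\sqsubseteq u$ and $t'\sqsubseteq w$, $f(u)\neq f(w)$. By Proposition 2.(1), $Y$ still mixes $s$ with $t$ and $t$ with $t'$, and the common depth hypothesis $depth_X(s)=depth_X(t)=depth_X(t')<\omega$ guarantees that each of $s,t,t'$ remains compatible with arbitrarily deep reducts of $Y$, so all the relevant restrictions $\mathcal{F}_s\upharpoonright Y$, $\mathcal{F}_t\upharpoonright Y$, $\mathcal{F}_{t'}\upharpoonright Y$ are nonempty fronts on the appropriate spaces.

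The plan is to sort the extensions of $t$ in $Y$ by their colour interaction with $s$ and $t'$, then extract a Ramsey-monochromatic subreduct. Define $c\colon\mathcal{F}_t\upharpoonright Y\to\{0,1\}^{2}$ by $c(v)=(a_s(v),a_{t'}(v))$, where $a_s(v)=1$ iff there exists $u\in\mathcal{F}\upharpoonright Y$ with $s\sqsubseteq u$ and $f(u)=f(v)$, and $a_{t'}(v)=1$ iff there exists $w\in\mathcal{F}\upharpoonright Y$ with $t'\sqsubseteq w$ and $f(w)=f(v)$. Since $\mathcal{F}_t\upharpoonright Y$ is a front on $Y/t$ and $c$ takes only four values, the abstract Nash--Williams theorem for topological Ramsey spaces, a standard consequence of the axioms $\boldsymbol{A.1}$--$\boldsymbol{A.4}$ as in \cite{To}, yields $Y_1\leq Y$ on which $c$ is constant.

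A case analysis on the constant value closes the argument. If $c\equiv(1,1)$ on $\mathcal{F}_t\upharpoonright Y_1$, any $v\in\mathcal{F}_t\upharpoonright Y_1$ furnishes $u\in\mathcal{F}\upharpoonright Y$ extending $s$ and $w\in\mathcal{F}\upharpoonright Y$ extending $t'$ with $f(u)=f(v)=f(w)$, so the pair $u,w\in\mathcal{F}\upharpoonright Y$ with $f(u)=f(w)$ directly contradicts the separation of $s$ with $t'$ witnessed by $Y$ (take $Y^{\ast}=Y$). If the first coordinate is identically $0$, no $v\in\mathcal{F}_t\upharpoonright Y_1$ shares a colour with any $u\in\mathcal{F}\upharpoonright Y$ extending $s$; a fortiori not with any $u\in\mathcal{F}\upharpoonright Y_1\subseteq\mathcal{F}\upharpoonright Y$, so for every $Y^{\ast}\leq Y_1$ no pair of $s$- and $t$-extensions in $\mathcal{F}\upharpoonright Y^{\ast}$ can agree in colour, meaning $Y_1$ separates $s$ with $t$, contradicting that $Y_1\leq X$ mixes $s$ with $t$. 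If the second coordinate is identically $0$, the symmetric argument shows $Y_1$ separates $t$ with $t'$, again a contradiction. All four cases are impossible, so $X$ must mix $s$ with $t'$.

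The main obstacle is invoking monochromaticity for a finite-valued colouring of the front $\mathcal{F}_t\upharpoonright Y$; this is the abstract Nash--Williams theorem for topological Ramsey spaces, which is used here as a black box but must be established (or cited from \cite{To}) for the argument to proceed. A secondary subtlety is that the auxiliary colour $a_s(v)$ (and similarly $a_{t'}(v)$) is an existential statement ranging over the whole of $\mathcal{F}\upharpoonright Y$ rather than over $\mathcal{F}\upharpoonright Y_1$; the asymmetry between these two fronts is precisely what makes the $(1,1)$ case collapse the contradiction to the level of $Y$ itself rather than to an arbitrary deeper reduct.
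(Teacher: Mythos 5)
Your proof is essentially correct under the paper's literal definitions, but it takes a genuinely different route from the paper's. The paper never touches the front $\mathcal{F}_t$ directly: it two-colours the one-step extensions $[t',X]_{n+1}$ according to whether they are \emph{mixed} with some one-step extension of $t$, applies $A.4$ to get a reduct $Y$ on which this is constantly $1$, repeats with $[t,Y]_{n+1}$ versus $[s,Y]_{n+1}$, and then chains the two stabilizations to contradict the separation of $s$ from $t'$. Your argument instead colours the full front $\mathcal{F}_t\upharpoonright Y$ by the four-valued predicate ``shares an $f$-colour with an $s$-extension / with a $t'$-extension'' and invokes the abstract Nash--Williams theorem once. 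What your version buys is directness: you reason about actual colour agreement on $\mathcal{F}$ rather than about the derived mixing relation on one-step extensions, so you avoid the delicate final step of the paper's proof (composing ``every $p$ is mixed with some $q$'' across two levels, which as written in the paper needs further justification). What it costs is the appeal to Nash--Williams for an arbitrary front, which the paper does not state explicitly; this is, however, recoverable from Corollary 2, since a finite colouring of a front induces a clopen, hence Suslin measurable, colouring of the basic set $[t,Y]$. One caveat you should address: your proof makes no visible use of the hypothesis $depth_X(s)=depth_X(t)=depth_X(t')$, yet the paper exhibits a counterexample to transitivity when the depths differ. The resolution is that both your argument and the paper's quietly assume that the reducts produced along the way remain compatible with all of $s,t,t'$ (otherwise ``separation'' holds vacuously and the contradictions in your zero-coordinate cases lose their force under the intended, compatibility-sensitive reading of the definitions); the equal-depth hypothesis is what makes this compatibility maintainable, and your one-sentence gesture toward it deserves to be expanded into an actual verification that $Y_1$ is still compatible with $s$ and $t'$.
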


\begin{proof} Suppose that $X$ mixes $s$ with $t$ and $t$ with $t'$, but $X$ separates $s$ with $t'$. Consider the two-coloring $c_1:[t', X]_{n+1} \to 2$ defined by 
\begin{equation*}
c_1(p)=
\begin{cases}
1 & \text{ if } \exists q\in [t,X]_{n+1}, \text{ and } $X$ \text{ mixes } p \text { with } q,\\
0 & \text{ otherwise.}
\end{cases}
\end{equation*}
The fact that $\langle U, \leq, r\rangle$ is a topological Ramsey space, gives us a $Y\in [t', X]$ so that $c_1\upharpoonright [t',Y]_{n+1} =1$. Similarly we consider the two-coloring $c_2:[t,Y]_{n+1}\to 2$ defined by:
\begin{equation*}
c_2(p)=
\begin{cases}
1 & \text{ if } \exists q\in [s,Y]_{n+1}, \text{ and } $Y$ \text{ mixes } p \text { with } q,\\
0 & \text{ otherwise.}
\end{cases}
\end{equation*}

 which gives us a $Z\in [t, Y]$ so that $c_2\upharpoonright [t,Z]_{n+1}=1$. But this implies that $Z\leq X$ mixes $s$ with $t'$, a contradiction.

\end{proof}

The condition $depth_X(s)=depth_X(t)=depth_X(w)<\omega$ is necessary for the transitivity of mixing to be valid. In \cite{Vlit} where the topological Ramsey space of $\langle FIN_k^{[\infty]},\leq,r \rangle$ is examined, we see that there are colorings where the corresponding notion of mixing is not transitive. An instance of such a coloring is as follows. Let $FIN$ be the set of all non empty finite subsets of $\omega$. An element $X$ of $FIN^{[\infty]}$ is a sequence $X=(x_n)_{n\in \omega}$ so that $x_n\in FIN$, $\max  x_n< \min  x_{n+1}$, for all $n\in \omega$. We write $x_n<x_{n+1}$ to show that $\max  x_n< \min  x_{n+1}$. Let $\langle X\rangle=\{ x_{n_0}\cup \dots \cup x_{n_k} : n_0<\dots n_k \}$. For $X,Y\in FIN^{[\infty]}$, set $Y\leq X$ if $y_n\in \langle X\rangle$ for all $n\in \omega$. Finally we define that $r_n(X)=(x_i)_{i<n}$ and $r=\cup_{n\in \omega} r_n$. Then $\langle X, \leq, r \rangle$ becomes a topological Ramsey space, see \cite{To} for a full exposition.
Let $f: \mathcal{A}X_2\to \omega$ defined by $c(x_0,x_1)=x_0\cup x_1$. Consider $s=x_0$, $t=x_0\cup x_2$ and $t'=x_0\cup x_1\cup x_2$. Notice that $X$ mixes $s$ with $t$ and $s$ with $t'$, but $X$ does not mixes $t$ with $t'$. 

\begin{proposition}There exists $X\leq U$ that decides for all $s,t\in\hat{ \mathcal{F}}\upharpoonright X$.
\end{proposition}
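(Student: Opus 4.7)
The plan is to invoke the pair-version of Proposition 1 with the property
\[
\mathcal{P}(s,t,Y) \equiv \text{``$Y$ decides for $s$ and $t$,''}
\]
interpreted vacuously whenever $s$ or $t$ fails to lie in $\hat{\mathcal{F}}\setminus \mathcal{F}$. If I can verify the hypothesis of Proposition 1 for this $\mathcal{P}$, namely that for every $s,t \in \mathcal{A}U$ and every $X \leq U$ there is a reduct $Y \leq X$ with $\mathcal{P}(s,t,Y)$, then Proposition 1 immediately delivers some $Z \leq U$ such that $Z$ decides for every $s,t \in \mathcal{A}Z$, and in particular for every $s,t \in \hat{\mathcal{F}}\upharpoonright Z \subseteq \mathcal{A}Z$. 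This is exactly what the proposition asks for.

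The single-pair hypothesis requires essentially nothing beyond the definitions. Fix $s,t \in \hat{\mathcal{F}}\setminus\mathcal{F}$ and $X \leq U$. By definition $X$ mixes $s$ with $t$ precisely when no reduct of $X$ separates them; so either some $Y \leq X$ separates $s$ with $t$, and that $Y$ decides by separating, or no such $Y$ exists and then $X$ itself mixes, hence $X$ decides and one takes $Y = X$. This built-in dichotomy is the whole content of the single-pair step; no appeal to $\boldsymbol{A.4}$ or $\boldsymbol{A.4}^\star$ is needed here.

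The only subtlety in the application of Proposition 1 is that its diagonal construction produces $Z$ as a limit of a decreasing chain $\cdots \leq X_{n+1} \leq X_n \leq \cdots$, so in order for the property $\mathcal{P}(s,t,\cdot)$ handled at stage $n$ to persist at $Z$ I need it to be hereditary downward. This is precisely Proposition 2(1), which says that both mixing and separating pass from $X$ to every reduct $Y \leq X$; so deciding passes down as well. I do not foresee a real obstacle: the argument is a definitional unpacking feeding into the abstract diagonal machine already assembled in Propositions 1 and 2, and the enumeration of pairs $(s,t) \in [A_n]^2$ appearing in Proposition 1's proof takes care of the quantification over all $s,t \in \mathcal{A}Z$ automatically.
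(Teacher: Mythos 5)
Your proof is correct, but it takes a genuinely different and more elementary route than the paper. The paper handles the single-pair step by applying the abstract Ramsey theorem (Corollary 2) to the two-coloring $c'$ of the space of reducts $[\emptyset,Y]$ that records whether $Y'$ mixes some one-step extension $p$ of $s$ with some one-step extension $q$ of $t$, and then reads off mixing or separating from the homogeneous side; only afterwards does it invoke Proposition 1 with $\mathcal{P}(s,t,Z)$ meaning ``$Z$ decides for $s$ and $t$.'' You observe instead that Definition 3 makes the single-pair dichotomy a tautology: ``$X$ mixes $s$ with $t$'' is \emph{defined} as ``no $Z\leq X$ separates $s$ with $t$,'' so either some reduct separates (and that reduct decides) or none does (and $X$ itself mixes, hence decides). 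This dispenses entirely with the Ramsey-theoretic input at the single-pair stage, and you correctly supply the one hypothesis the fusion in Proposition 1 silently needs, namely that deciding is downward hereditary, via Proposition 2(1). What your route buys is economy and rigor: the paper's $c'=0$ branch (``no $p,q$ are mixed by any $Y'\leq Z$, hence $Z$ separates $s$ with $t$'') is not fully justified as written, whereas your argument has no such gap. What the paper's route would buy is robustness if one instead took the positive characterization (``for every $Y\leq X$ there exist $s',t'\in\mathcal{F}\upharpoonright Y$ with $f(s')=f(t')$'') as the definition of mixing, in which case a genuine dichotomy argument is required; under the definitions actually adopted in the paper, your shortcut is legitimate.
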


\begin{proof} Given $s,t$ and $Y\leq U$ it suffices to show that there exists $Z\leq Y$ which decides for $s$ and $t$. Then the statement of the this proposition will follow from Proposition $1$ and the property $\mathcal{P}(s,t,Z)$ stating that $Y$ decides for $s$ and $t$. Consider the two-coloring: $c':[\emptyset, Y]\to 2$ defined by 
\begin{equation*}
c'(Y')=
\begin{cases}
1 & \text{ if } \exists p\in [s,Y']_{|s|+1}, q\in [t,Y']_{|t|+1} \text { so that }  Y' \text{ mixes } p \text{ with } q,\\
0 & \text{ otherwise.}
\end{cases}
\end{equation*}

The fact that $\langle Y, \leq, r\rangle$ is a topological Ramsey space, provides us with $Z$ that either mixes $s$ with $t$, in the case that $c'\upharpoonright [\emptyset, Z]=1$, or separates them, in the case that $c'\upharpoonright [\emptyset, Z]=0$.

\end{proof}

Now we prove Theorem $1$.
\begin{proof} Let $f:\mathcal{F}\to \omega$ be given as in Theorem $1$. Assume first that the notion of mixing introduced in Definition $3$ is transitive. In the next subsection we deal with the non-transitive case. 

Observe that axiom $A.4^\star$ for any $s\in \hat{ \mathcal{F}}\setminus \mathcal{F}$, $|s|=n$ and $X$ with $[s,X]\neq \emptyset$, provides us with a $Y\in [s, X]$ and  $\phi_s$ so that for $p,q \in [s,Y]_{n+1}$, $Y$ mixes $ p$ with $ q$ if and only if $\phi_s(p)=\phi_t(q)$. This is done by considering the coloring $c:[s,U]_{n+1}\to \omega$ defined by $c(p)=c(p')$ if and only if $U$ mixes $p$ with $p'$. 
By Propostiton $1$ there exists $X\leq U$ so that for every $s\in \hat{ \mathcal{F}}\setminus \mathcal{F}\upharpoonright X$ there exists such a $\phi_s$. We assume that $U$ has this property, instead of one of its reducts. Therefore we assume that $U$ decides any $s,t\in \mathcal{A}U$ and for any $s\in \hat{ \mathcal{F}}\setminus \mathcal{F}$, $|s|=n$, $\phi_s$ defines an equivalence relation on $[s,U]_{n+1}$.

Notice that for $s\in  \hat{ \mathcal{F}}\setminus \mathcal{F}$, if $\phi_s=\emptyset$, then $U$ mixes $s\cup w$ with $s\cup v$ for every $w,v\in [s,U]_{1}$.

Suppose now that our topological Ramsey space $\langle U, \leq, r\rangle$ has the property that given any $s,t\in \mathcal{A}U_n$, $Y\leq U/(s,t)$, then $Y_s=s\cup (Y[s])$ and $Y_t=t\cup (Y[t])$, where $Y[s]\neq \emptyset$, $Y[t]\neq \emptyset$. In other words $Y$ is compatible with both $s$ and $t$.
Instances of such a topological Ramsey spaces are the $\langle \mathcal{R}_1, \leq, r\rangle$  \cite{Do-To} and $\langle FIN_k^{[\infty]},\leq,r\rangle$ \cite{To}.

Assume that $Y_t$ mixes $s$ with $t$ and consider the two-coloring $c': [t,Y_t]_{n+1}\to 2$ defined by
\begin{equation*}
c'(p)=
\begin{cases}
1 & \text{ if } \exists q\in [s, Y_s]_{n+1}, Y_t \text{ mixes } p \text { with } q  \text{ and } \phi_t(p)=\phi_s(q),\\
0 & \text{ otherwise.}
\end{cases}
\end{equation*}

The fact that $\langle U, \leq, r\rangle$ is a topological Ramsey space, Corollary $1$, gives us a $Z \leq Y_t$ where $c'\upharpoonright [t,Z]_{n+1}$ is constant. If the constant value is equal to one, then on $Z$ we have that for every $p\in [t,Z]_{n+1}$ there exists a $q\in [s, Z_s]_{n+1}$, where $Z_s=s\cup (Z\setminus t)$, Z mixes $p$ with $q$ and also $\phi_t(p)=\phi_s(q)$. 

If the constant value $c'\upharpoonright [t,Z]_{n+1}$ is equal to zero, for every $p\in [t,Z]_{n+1}$ either there exists $q\in [s,Z]_{n+1}$ so that $Z$ mixes $p$ with $q$ and $\phi_t(p)\neq\phi_s(q)$, or there is not such a $q$. We require $\langle U, \leq, r\rangle$ to satisfy the following property \\ $\boldsymbol{(\mathcal{P})}$: if $c'\upharpoonright [t,Z]_{n+1}=0$, then there exists $Z'\leq Z$ so that $Z'$ separates $s$ with $t$.

In the case that $(\mathcal{P})$ is not satisfied, i.e. for every $Z'\leq Z$, there exist $p\in [t,Z']_{n+1}$ and $q\in [s,Z']_{n+1}$, so that $Z'$ mixes $p$ with $q$ and $\phi_t(p)\neq \phi_s(q)$, no canonization result can be obtained.

Therefore we assume that if $Y$ mixes $s$ with $t$, then $c'\upharpoonright [t,Z]_{n+1}=1$. Observe that if $c'\upharpoonright [t,Z]_{n+1}=1$, then $Z$ mixes $s$ with $t$ by Proposition $2$.

What we have shown is if $\langle U,\leq, r\rangle$ satisfies $(\mathcal{P}$), given any $s,t\in \mathcal{A}U_n$, there exists $Z\leq U$ so that one of the following two possibilities holds. 
\begin{enumerate}
\item{}$Z$ mixes $s$ with $t$ if and only if for every $p\in [t,Z]_{n+1}$ there exists $q\in [s,Z]_{n+1}$ so that $Z$ mixes $p$ with $q$ and $\phi_s(q)=\phi_t(p)$.
\item{}$Z$ separates $s$ with $t$. 
\end{enumerate}

Suppose now that our topological Ramsey space $\langle U, \leq, r\rangle$ has the property that for any $s,t\in \mathcal{AU}_n$ and $X\leq U$ there exists $Y\leq X$ so that $[s,Y]\neq \emptyset$ and $[t,Y]=\emptyset$. As we defined above, if $X$ is so that $[s,X]\neq \emptyset$ and $[t,X]\neq \emptyset$ we say that $X$ is $\it{compatible}$ with $s$ and $t$.
An instance of such a topological Ramsey space forms the space of strong subtrees $\langle \mathcal{S}_{\infty}(U),\leq,r\rangle$, where $U$ in this context is a tree with fixed branching number $b$ but no finite branches \cite{To}. Another is the space of connections $\langle F_{\omega,\omega}, \leq ,r\rangle$ \cite{Vl}.

First we need the following: given $s,t\in \mathcal{A}U_n$, $depth_U(s)=k \leq depth_U(t)=l$ we require our space to have the property that: if $s$ has an end extension $w$, $s\cup w\in [s,U]_{n+1}$, made out of $U[j,m)$, then $t$ has also an end extension $v$, $t\cup v\in [t,U]_{n+1}$, made out of $U[j,m)$, for $l\leq j\leq m$. Suppose not, i.e. there exist two finite approximations $s,t\in \mathcal{A}U_n$, as above, so that for every $X\leq U$, $\exists Y\leq X$ and a finite set $\mathcal{X}\subset [j,m)$ such that $t\cup v\in [t,Y]_{n+1}$ and $v$ is made out of $U_{\mathcal{X}}=\cup_{n\in \mathcal{X}}U(n)$ but there is no $s\cup w\in [s,Y]_{n+1}$ where $w$ is made out of $U_{\mathcal{X}}$. Consider the two-coloring\\ $c':[t,U]_{n+1}\to 2$ defined by 
\begin{equation*}
c'(t\cup v)=
\begin{cases}
1 & \text{ if } \exists s\cup w\in [s,U]_{n+1}, w,v \text{ are both made out of } U_{\mathcal{X}}\\ &  \text{ for some } \mathcal{X}\subset [j,m),\\
0 & \text{ otherwise.}
\end{cases}
\end{equation*}

We get $X\leq U$ so that $c'\upharpoonright [t,X]_{n+1}=0$ and $X$ is compatible with $s$ as well. But then no canonization theorem can be achieved. This is due to the fact that $\phi_s([s,X]_{n+1}\cap  \phi_t([t,X]_{n+1})=\emptyset$ always, even when $X$ mixes $s$ with $t$.
Therefore we can assume that our topological Ramsey space satisfies the above property. We remark here that all the known topological Ramsey spaces satisfy the above assumption. 

We need also to assume that for any $X\leq U$, $s\in \mathcal{A}U_m$, $m\neq 0$, $s\in \hat{\mathcal{F}}$ and $v$ so that $s\cup v\in [s,X]_{m+1}$, there exists $Y\in [s,X]$ so that $s\cup v\notin [s,Y]_{m+1}$. Suppose that there exists $s\in \mathcal{A}U_m$ and $v$ so that $s\cup v\in [s,Y]_{m+1}$, for all $Y\leq U$. This in general is not possible cause our space will violate axiom $A.4$. To see that consider the coloring $c':[s,U]_{m+1}\to 2$ defined by $c(s\cup v)=0$ and $c(p)=1$, for all $p\in [s, U]_{m+1}$, $p\neq s\cup v$. There is no $Y\leq U$ that will satisfy the conclusions of $A.4$. The only exception occurs if $s\cup v$ is the only element of the set $[s,U]_{m+1}$. In that case observe that $\phi_{s}=\emptyset$. 
Therefore we can assume that given $s\in \mathcal{A}U_m$,  $s\in \hat{\mathcal{F}}$ and $v$ so that $s\cup v\in [s,U]_{m+1}$, there exists $Y\in [s, U]$ so that $s\cup v \notin [s,Y]_{m+1}$. In other words we can always go to a reduct that avoids a specific end-extension of $s$ with length $m+1$. If this is not the case, then $\{s\cup v\}=[s,U]_{m+1}$ and $\phi_s=\emptyset$.

Given $s,t\in \mathcal{A}U_n$, recall from above $U[t]=\{ p: t\sqsubseteq p, p=r_j(Y), Y\leq U\}$ and $U[s]=\{ q: s\sqsubseteq q, q=r_i(X), X\leq U\}$. Observe that $[t,U]_{n+1}\subset U[t]$ and $[s,U]_{n+1}\subseteq U[s]$ respectively. We have also assumed that $depth_U(s)=k<depth_U(t)=l$. Let $A=\{q \in [s,U]_{n+1}: q\text{ made of } U[l,m)\}$, where $m$ is the smallest $depth$ of all the length one end extensions of $t$.

At this stage we must assume that our space has the property that there exists an one-to-one map $\iota: \cup [t,U]_{n+1}\to \cup [s,U[q]]_{n+1}$, $q\in A$, satisfying the following properties:
\begin{enumerate}
\item{} If $t\sqsubseteq p$, then $\iota(t)\sqsubseteq \iota(p)$,
\item{} If $depth_U(p)=m$, then $depth_U(\iota(p))=m$,
\item{} If $p\in [t,U]_{n+1}$ and $p(n)$ is made out of $ U[l,m)$, then $\iota(p)\in [s,U[q]]_{n+1}$ and $\iota(p)(n)$ is made out of $U[l,m)$.
\end{enumerate}

Assuming the existence of such a map and that $U$ mixes $s$ with $t$, consider the two-coloring $c':[s,U[q]]_{n+1}\to 2$ defined by
\begin{equation*}
c'(q')=
\begin{cases}
1 & \text{ if } \exists p \in [t,U[t]]_{n+1} \text{ such that } \iota (p)=q', \\
    &U \text{ mixes } p \text{ with } q' \text{ and } \phi_s(q')=\iota( \phi_t(p)),\\
0 & \text{ otherwise.}
\end{cases}
\end{equation*}

Once again, we get a $Z\leq U$, $Z[q]\neq \emptyset$, $Z[t]\neq \emptyset$ so that either $$c'\upharpoonright [s, Z[q]]_{n+1}=1\text{ or }c'\upharpoonright[s, Z[q]]_{n+1}=0.$$ In the first case $Z$ mixes $s$ with $t$ and for every $q'\in [s,Z[q]]_{n+1}$, there exists $ p\in [t, Z]_{n+1}$ such that $Z$ mixes $p$ with $q'$ and $\phi_s(q')=\iota( \phi_t(p))$. 
We require here, as above, that $\langle U, \leq, r\rangle$ satisfies property $(\bf{\mathcal{P}})$, so the alternative $c'\upharpoonright [s, Z[q]]_{n+1}=0$ is excluded.

Up to this point we have shown that the topological Ramsey space $\langle U, \leq, r\rangle$ has the property that given $s,t \in \hat{ \mathcal{F}}$, there exists $Z\leq U$ that mixes $s$ with $t$ if and only if $\phi_s$ agree with $\phi_t$, up to $\iota$, otherwise $Z$ separates $s$ with $t$. By Proposition $1$ there exists $X\leq Z$ with the property that given any $s,t\in \mathcal{A}X_n$ one has that either $X$ mixes $s$ with $t$ if and only if for every $p\in [t,X]_{n+1}$ there exists $q\in [s,X]_{n+1}$ so that $X$ mixes $p$ with $q$ and $\phi_s(q)=\phi_t(p)$, up to $\iota$, or $X$ separates $s$
 with $t$. 
 From now on we will denote $\mathcal{F}$ instead of $\mathcal{F}\upharpoonright X$ and $ \hat{ \mathcal{F}}\setminus \mathcal{F}$ instead of $ \hat{ \mathcal{F}}\setminus \mathcal{F}\upharpoonright X$ since everything is taking place below $X$. We will also omit the $\iota$.

 We are now ready to define the inner map $\phi$ that is going to witness the coloring being canonical. For $s\in \hat{\mathcal{F}}$, $|s|=n$, we define $$\phi(s)=\bigcup_{s'\sqsubseteq s}\phi_{s'}(s(|s'|))=(\phi_{r_0(t)}(r_1(t)), \phi_{r_1(t)}(t(2)), \dots, \phi_{r_{n-1}(t)}(t(n))).$$
  
  Next we have to show the following four lemmas.
  
  \begin{lemma}The following are true for all $Y\leq X$.
  \begin{enumerate}
 \item{} Let $s,t \in \hat{ \mathcal{F}}\setminus \mathcal{F}$. If $\phi_s\neq \emptyset$ and $\phi_t=\emptyset$, there exists $w\in [s, X]_{|s|+1}$ so that $X$ mixes $t$ with $s\cup w$ with at most one equivalence class of $ [s, X]_{|s|+1}$.
  
  \item{} If $X$ separates $s$ with $t$, then its separates $s\cup w$ with $t\cup v$ for all $w\in  [s, X]_{|s|+1}$ and $v\in  [t, X]_{|t|+1}$.
  
  \item{} If $s\sqsubset t$, $s,t\in \mathcal{F}$  and $\phi(s)=\phi(t)$, then $X$ mixes $s$ with $t$.
  \end{enumerate}
    \end{lemma}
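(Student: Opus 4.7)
My plan is to treat the three parts separately, in increasing order of difficulty.

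Part (2) is a direct unwinding of Definition 3. If $X$ separates $s$ with $t$, then for every $Y\leq X$ and every $s^\ast, t^\ast\in\mathcal{F}\upharpoonright Y$ with $s\sqsubseteq s^\ast$ and $t\sqsubseteq t^\ast$ one has $f(s^\ast)\neq f(t^\ast)$. The hypotheses $s\cup w\sqsubseteq s^\ast$ and $t\cup v\sqsubseteq t^\ast$ are strictly stronger, so the same inequality is inherited and witnesses that $X$ separates $s\cup w$ with $t\cup v$.

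For Part (1), the plan is to argue by contradiction. Suppose there are $w_1,w_2\in[s,X]_{|s|+1}$ with $\phi_s(s\cup w_1)\neq\phi_s(s\cup w_2)$ such that $X$ mixes $t$ with both $s\cup w_1$ and $s\cup w_2$. Since $\phi_t=\emptyset$, $X$ mixes every pair of length-one extensions of $t$; applying Proposition 1 I will first refine $X$ (without disturbing $\phi_s\neq\emptyset$ or $\phi_t=\emptyset$) so that some chosen $v\in[t,X]_{|t|+1}$ satisfies $depth_X(t\cup v)=depth_X(s\cup w_1)=depth_X(s\cup w_2)$. The hypothesis that $X$ mixes $t$ with each $s\cup w_i$, combined with the $\phi_t$-induced mixing of $t$ with $t\cup v$, will then give that $X$ mixes $t\cup v$ with both $s\cup w_1$ and $s\cup w_2$. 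An application of the transitivity-of-mixing lemma, whose depth hypothesis is exactly what the refinement secures, will then produce a mixing of $s\cup w_1$ with $s\cup w_2$, contradicting the characterization of $\phi_s$ (two elements of $[s,X]_{|s|+1}$ are mixed iff their $\phi_s$-values coincide).

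For Part (3), reading $s,t\in\hat{\mathcal{F}}\setminus\mathcal{F}$ (the antichain property of a front rules out $s\sqsubset t$ within $\mathcal{F}$), the plan is induction on $|t|-|s|$. In the base case $t=s\cup w$, the equality $\phi(s)=\phi(t)$ forces the additional coordinate $\phi_s(w)$ to be absorbed by the preceding data of $\phi(s)$, which by the mixing characterization delivers that $X$ mixes $s$ with $s\cup w=t$. The inductive step factors $s\sqsubset r_{|s|+1}(t)\sqsubset t$, applies the inductive hypothesis to each piece, and chains the resulting mixings through transitivity of mixing. The main obstacle in both (1) and (3) will be arranging a common value of $depth_X$ for the triple of elements to which transitivity is applied; this requires a preliminary refinement of $X$ via Proposition 1, or a judicious choice of length-one extensions, while preserving the structural hypotheses $\phi_s\neq\emptyset$, $\phi_t=\emptyset$, and (where relevant) $\phi(s)=\phi(t)$ in force.
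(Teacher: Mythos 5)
Your overall strategy coincides with the paper's: part (2) is the same direct unwinding of the definition of separation (the paper merely phrases it contrapositively), part (1) is the same argument by contradiction via transitivity of mixing combined with the $A.4^\star$ characterization of $\phi_s$ (two one-step extensions of $s$ are mixed iff their $\phi_s$-values agree), and part (3) is the same induction, after the same correct observation that the hypothesis $s\sqsubset t$ must be read inside $\hat{\mathcal{F}}$ rather than inside the antichain $\mathcal{F}$.

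Two points of divergence deserve attention. First, in part (1) you propose to refine $X$ so that $depth_X(t\cup v)=depth_X(s\cup w_1)=depth_X(s\cup w_2)$ before invoking the transitivity lemma. That step would fail as stated: $w_1$ and $w_2$ are two fixed, distinct one-step extensions of $s$, and passing to a reduct cannot in general force two distinct approximations to acquire a common depth (in the Ellentuck space, $depth_Y(s\cup\{2\})$ and $depth_Y(s\cup\{5\})$ differ in every $Y$ containing both points). The maneuver is also unnecessary: Lemma 2 is proved inside the branch of the proof of Theorem 1 in which mixing is assumed transitive outright, so the paper applies transitivity directly to the triple $s\cup w$, $t$, $s\cup v$ with no depth hypothesis; Lemma 1 only records that equal depth is one situation in which transitivity is automatic. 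Second, in part (3) your phrase that the extra coordinate is ``absorbed by the preceding data'' should be sharpened to the mechanism the paper actually uses: since $\phi(t)$ is the concatenation of $\phi(s)$ with the coordinates $\phi_{r_j(t)}(t(j+1))$ for $|s|\le j<|t|$, the equality $\phi(s)=\phi(t)$ forces each of these coordinates to arise from $\phi_{r_j(t)}=\emptyset$, and emptiness of $\phi_{r_j(t)}$ is precisely what yields that $r_j(t)$ is mixed with all of its one-step extensions; the induction then chains these mixings from $s=r_{|s|}(t)$ up to $t$.
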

    
    \begin{proof}
    Suppose that $X$ mixes $t$ with $s\cup w$, and also $t$ with $s\cup v$, and $\phi_s(s \cup w)\neq \phi_s(s\cup v)$. By Lemma $1$, we get that $X$ mixes $s\cup w$ with $s\cup v$, a contradiction.
    
    Suppose that $X$ separates $s$ with $t$ and there exists $s\cup w\in [s, X]_{|s|+1}$ and $t\cup v\in  [t, X]_{|t|+1}$ so that $X$ mixes $s\cup w$ with $t\cup v$. This means that for every $Y\leq X$ there exists $w'\in \mathcal{F}_{s\cup w}\upharpoonright Y$ and $v'\in \mathcal{F}_{t\cup v}\upharpoonright Y$ so that $Y$ mixes $s\cup w\cup w'$ with $t\cup v \cup v'$, a contradiction to our assumption that $X$ separates $s$ with $w$. 
    
    Suppose now that  $s,t\in \mathcal{F}$, $s\sqsubset t$ and $\phi(s)=\phi(t)$. This means that for all $j\in [|s|,|t|]$, $\phi_{r_j(t)}=\emptyset$, which, by an induction on $n= [|s|,|t|]$, implies that $s$ gets mixed by $X$ with all the extensions of $r_{|s|}(t)$. In particular $X$ mixes $s$ with $t$.
    
    \end{proof}
  
  \begin{lemma}
  For $s,t\in \hat{\mathcal{F}}$ if $\phi(s)=\phi(t)$, then $X$ mixes $s$ with $t$. In particular if $s,t\in \mathcal{F}$ and $\phi (s)=\phi (t)$, then $c(s)=c(t)$.
  \end{lemma}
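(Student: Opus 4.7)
The plan is to induct on the common length $n = |s| = |t|$, the equality $|s| = |t|$ being forced by $\phi(s) = \phi(t)$ since $\phi$ produces a tuple of length $|s|$. The base case $n = 0$ is trivial: $s = t = \emptyset$, and $X$ mixes $\emptyset$ with $\emptyset$ because, for any $Y \leq X$, the front $\mathcal{F}$ supplies some $s' \in \mathcal{F}\upharpoonright Y$, and taking $t' = s'$ satisfies $f(s') = f(t')$.

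For the inductive step, set $s' = r_{n-1}(s)$ and $t' = r_{n-1}(t)$. Unpacking the recursive definition of $\phi$ given just before the statement, the hypothesis $\phi(s) = \phi(t)$ breaks into two pieces: first, $\phi(s') = \phi(t')$ from the initial $n-1$ coordinates of the tuple, and second, $\phi_{s'}(s) = \phi_{t'}(t)$ from the last coordinate. By the inductive hypothesis, $X$ mixes $s'$ with $t'$. Now apply the canonical characterization of mixing established earlier: $X$ mixes $s'$ with $t'$ iff for every $p \in [t',X]_n$ there exists $q \in [s',X]_n$ with $X$ mixing $p$ with $q$ and $\phi_{s'}(q) = \phi_{t'}(p)$, where the suppressed map $\iota$ preserves depth. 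Taking $p = t$, we obtain $q \in [s',X]_n$ with $X$ mixing $t$ with $q$, $\phi_{s'}(q) = \phi_{t'}(t) = \phi_{s'}(s)$, and $depth_U(q) = depth_U(t)$. Since $\phi_{s'}$ canonizes the mixing equivalence relation on $[s',X]_n$ by $\boldsymbol{A.4}^\star$, the equality $\phi_{s'}(s) = \phi_{s'}(q)$ forces $X$ to mix $s$ with $q$. Finally, transitivity of mixing (Lemma~1) applied to the triple $s, q, t$, all of common finite depth $depth_U(t)$, yields that $X$ mixes $s$ with $t$, completing the inductive step.

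For the ``in particular'' clause, suppose $s, t \in \mathcal{F}$ with $\phi(s) = \phi(t)$. The main claim gives that $X$ mixes $s$ with $t$. Because $\mathcal{F}$ is a front, distinct elements of $\mathcal{F}$ are $\sqsubseteq$-incomparable, so the only $s', t' \in \mathcal{F}\upharpoonright Y$ with $s \sqsubseteq s'$ and $t \sqsubseteq t'$ are $s' = s$ and $t' = t$. The mixing condition therefore reduces to $f(s) = f(t)$, i.e., $c(s) = c(t)$.

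The main anticipated obstacle is securing the equal-depth hypothesis in Lemma~1: the intermediate point $q$ produced in the inductive step could a priori lie at a depth different from that of $t$, and in that case transitivity of mixing would not apply directly. This is precisely why the earlier development invested in extracting the depth-preserving map $\iota$ and in reformulating the canonical characterization of mixing in terms of it; with that reformulation in hand, the transitivity step goes through cleanly.
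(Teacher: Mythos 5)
There is a genuine gap, and it sits right at the first line of your argument. You claim that $\phi(s)=\phi(t)$ forces $|s|=|t|$ because $\phi$ outputs a tuple of length $|s|$. Under the paper's conventions this is false: components with $\phi_{r_j(\cdot)}=\emptyset$ are effectively dropped, so $\phi(s)=\phi(t)$ can hold with $s\sqsubset t$ or, more generally, with $s$ and $t$ of different lengths and built from different levels of $X$. This is not a pathological edge case but the very situation the lemma is needed for: Lemma~4(3) explicitly treats $s\sqsubset t$ with $\phi(s)=\phi(t)$ via the observation that all intermediate $\phi_{r_j(t)}$ are empty, and Lemma~6's induction likewise hinges on the asymmetric case $s\cap X(l)\neq\emptyset$, $t\cap X(l)=\emptyset$. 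Correspondingly, the paper's own proof does not induct on length at all; it inducts on the depth level $l<\max(\mathrm{depth}_X(s),\mathrm{depth}_X(t))$, compares $s\cap r_l(X)$ with $t\cap r_l(X)$, and resolves the asymmetric case using the earlier remark that $\phi_{s'}=\emptyset$ means $s'$ is mixed with every one-step extension of itself, followed by transitivity. Your proof never engages with empty components, so it cannot reach these cases.

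A second, smaller problem is in your application of transitivity. Lemma~1 requires all three of $s$, $q$, $t$ to have the \emph{same} finite depth. The map $\iota$ gives you $\mathrm{depth}_U(q)=\mathrm{depth}_U(t)$, but nothing in your argument establishes $\mathrm{depth}_U(s)=\mathrm{depth}_U(t)$: the equality $\phi_{s'}(s)=\phi_{s'}(q)$ only constrains the subset of $s(n)\cup[k,l)$ selected by $\phi_{s'}$, and when that subset omits the level information (e.g.\ when $\phi_{s'}$ is coarse or empty) the depths of $s$ and $q$ may differ. So even in the equal-length case your chain $s\to q\to t$ is not licensed by Lemma~1 as stated. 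In the regime where $|s|=|t|$, all $\phi$-components are nonempty and depths agree, your argument is essentially a cleaner rendering of what the paper intends; but as written it does not prove the lemma in the generality in which it is subsequently used.
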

  
  \begin{proof} The proof is by induction on $l< \max (depth_X(s), depth_X(t))$. For $l=0$, $s\cap r_0(U)=t\cap r_0(X)=\emptyset$, so $X$ mixes $s\cap r_0(X)$ with $t\cap r_0(X)$. Assume that $X$ mixes $s\cap r_{l-1}(X)$ with $t\cap r_{l-1}(X)$ and consider $s\cap r_l(X)$ and $t\cap r_l(X)$. If $s\cap r_l(X)\neq \emptyset$ and $t\cap r_l(X)=\emptyset$, then we must have that $\phi_{s\cap r_l(X)}=\emptyset$. This implies that $s\cap r_l(X)$ is mixed with $s\cap r_{l-1}(X)$ which is mixed with $t\cap r_l(X)$. Therefore $s\cap r_l(X)$ is mixed with $t\cap r_l(X)$. Similarly if $s\cap r_l(X)= \emptyset$ and $t\cap r_l(X)\neq \emptyset$. 
  \end{proof}

\begin{lemma}
For $s,t\in \hat{\mathcal{F}}$, $s\neq t$ it doesn't hold that $\phi(s)\sqsubseteq \phi(t)$.
  \end{lemma}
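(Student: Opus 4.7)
My plan is to argue by contradiction, using that $\phi$ is length-preserving together with Lemmas $1$ and $2$. Suppose $s,t\in \hat{\mathcal{F}}$ are distinct and $\phi(s)\sqsubseteq \phi(t)$. The inductive definition of $\phi$ makes its $i$-th coordinate equal to $\phi_{r_{i-1}(r)}(r(i))$, so $|\phi(r)|=|r|$ for every $r\in \hat{\mathcal{F}}$. Hence $m:=|s|\leq |t|$, and if $t^{\ast}:=r_{m}(t)$, then the first $m$ coordinates of $\phi(t)$ are exactly $\phi(t^{\ast})$, forcing $\phi(s)=\phi(t^{\ast})$. By Lemma $2$, $X$ mixes $s$ with $t^{\ast}$. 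The proof then splits according to whether $s$ equals $t^{\ast}$ or not.

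First case, $s = t^{\ast}$: then $s\sqsubset t$ strictly since $s\neq t$. The tail coordinates of $\phi(t)$, namely $\phi_{r_{i}(t)}(t(i+1))$ for $m\leq i<|t|$, are nonempty whenever the corresponding inner maps are nonempty; the ``trivial action'' case has already been isolated in Lemma $1$(1). Combined with the front property of $\mathcal{F}$ (no element of $\mathcal{F}$ is a proper prefix of any element of $\hat{\mathcal{F}}$), this forces $s=t$, contradicting $s\neq t$.

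Second case, $s\neq t^{\ast}$: let $i$ be the smallest index with $r_{i}(s)\neq r_{i}(t^{\ast})$, so $r_{i-1}(s)=r_{i-1}(t^{\ast})$ but $s(i)\neq t^{\ast}(i)$. Equating the $i$-th coordinates of $\phi(s)$ and $\phi(t^{\ast})$ gives $\phi_{r_{i-1}(s)}(r_{i}(s))=\phi_{r_{i-1}(s)}(r_{i}(t^{\ast}))$. The depth-coding $\phi_{s'}(p)\subseteq p(|s'|)\cup [depth_{U}(s'),depth_{U}(p))$ guaranteed by $A.4^{\star}$ then forces the depth intervals encoded on each side to coincide; iterating through the matching coordinates at positions $j>i$ propagates equality of level data to all $j\geq i$, contradicting $s(i)\neq t^{\ast}(i)$.

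The main obstacle lies in this second case: one must exploit the inner constraint $\phi_{s'}(p)\subseteq p(|s'|)\cup[depth_{U}(s'),depth_{U}(p))$ not merely as a set-theoretic membership but as a rigid positional label, so that equality of two images within the same $\phi_{s'}$ entails equality of their depth geometry, and hence (together with the matching higher coordinates) of the arguments themselves. Lemma $2$ on its own is too weak here, since mixing is compatible with distinct length-$m$ approximations; the depth information provided by $A.4^{\star}$ is precisely what upgrades an equality of $\phi$-coordinates into an equality of the underlying $r_{i}$'s.
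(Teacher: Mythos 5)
Your argument has a genuine gap, and it is concentrated in your ``second case,'' which you yourself identify as the main obstacle. There you need that equality $\phi_{s'}(p)=\phi_{s'}(q)$ of two values of the \emph{same} inner map forces $p=q$ (``equality of two images within the same $\phi_{s'}$ entails equality \dots of the arguments themselves''). This is false, and it cannot be repaired: the entire point of $A.4^\star$ is that $\phi_{s'}$ is a non-injective quotient identifying exactly the one-step extensions that the coloring does not distinguish. For instance $\phi_{s'}=\emptyset$ is a perfectly good inner map, constant on all of $[s',X]_{|s'|+1}$; if the inner maps were injective the canonization theorem would be vacuous. The depth constraint $\phi_{s'}(p)\subseteq p(|s'|)\cup[k,l)$ bounds \emph{where} the value lives, it does not make the value a faithful label of $p$. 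Your first case is also not closed: the front property only forbids $s\sqsubset t$ when $s\in\mathcal{F}$, whereas the lemma is stated for $s,t\in\hat{\mathcal{F}}$, and the configuration $s\sqsubset t$ with $s\in\hat{\mathcal{F}}\setminus\mathcal{F}$ is precisely the substantive case (it is the one invoked later, in the proof that $c(s)=c(t)$ implies $\phi(s)=\phi(t)$).

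The paper's proof uses a different and essentially unavoidable idea, which your proposal never touches: \emph{avoidance of an equivalence class on a reduct}. From $\phi(s)\sqsubset\phi(t)$ one takes $j$ with $\phi(s)=\phi(r_j(t))$ and, since the extension is strict, some least $i\ge j$ with $\phi_{r_i(t)}\neq\emptyset$ (so still $\phi(s)=\phi(r_i(t))$). By the preceding lemma $X$ mixes $s$ with $r_i(t)$. Because $\phi_{r_i(t)}\neq\emptyset$, transitivity of mixing (Lemma 2(1)) confines the mixing of $s$ with one-step extensions $r_i(t)\cup v$ to at most one $\phi_{r_i(t)}$-class. One then passes to a reduct $Y\le X$ omitting that class --- using the standing assumption, established in the proof of the main theorem, that any single one-step extension can be avoided unless it is the unique one, in which case $\phi_{r_i(t)}=\emptyset$ and there is nothing to avoid. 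On $Y$, $s$ is separated from every element of $[r_i(t),Y]_{i+1}$, hence from $r_i(t)$ itself, contradicting the fact that mixing is inherited by reducts. You should rebuild your proof around this mechanism rather than around injectivity of the inner maps.
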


\begin{proof}Suppose that there are $s,t\in \hat{\mathcal{F}}$ with $\phi(s)\sqsubset \phi(t)$. Let $j<\omega$ be so that $\phi(s)=\phi(r_j(t))$. There is at least one $i\in \omega, i>j$ so that $\phi_{r_i(t)}\neq \emptyset$. Assume that $\phi_{r_{j}(t)}\neq \emptyset$, which implies that $X$ mixes $s$ with $r_{j}(t)\cup v$, for some $v$ that belongs to the equivalence relation on $[r_j(t),X]_{j+1}$ induced by $\phi_{r_j(t)}$. But then consider a reduct $Y\leq X$ that avoids $v$. Then $Y$ separates $s$ with $r_j(t)$, a contradiction. If no such a reduct $Y$ is possible to be fund, it implies that $r_j(t) \cup v=[r_j(t),X]_{j+1}$, which implies that $\phi_{r_j(t)}=\emptyset$.
\end{proof}
                   
    \begin{lemma}
    For $s,t\in \mathcal{F}$, if  $c(s)=c(t)$, then $\phi(s)=\phi(t)$.
  \end{lemma}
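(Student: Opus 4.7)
The plan is to argue the contrapositive: assume $\phi(s)\neq \phi(t)$ and derive $c(s)\neq c(t)$. By Lemma 3 the tuples $\phi(s)$ and $\phi(t)$ are $\sqsubseteq$-incomparable, so neither is a prefix of the other and they must first disagree at some coordinate $i<\min(|s|,|t|)$; let $i$ be minimal with this property. By minimality of $i$, the initial segments $r_i(s), r_i(t)\in \hat{\mathcal{F}}\setminus \mathcal{F}$ satisfy $\phi(r_i(s))=\phi(r_i(t))$, and Lemma 2 then yields that $X$ mixes $r_i(s)$ with $r_i(t)$.

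Next I invoke the mixing/separating dichotomy established on $X$ earlier in the proof. Since $X$ mixes $r_i(s)$ with $r_i(t)$, for every $p\in [r_i(t),X]_{i+1}$ there exists $q\in [r_i(s),X]_{i+1}$ with $X$ mixing $p$ with $q$ and $\phi_{r_i(s)}(q)=\phi_{r_i(t)}(p)$; I apply this to $p=r_{i+1}(t)$. Because $\phi_{r_i(s)}(r_{i+1}(s))\neq \phi_{r_i(t)}(r_{i+1}(t))=\phi_{r_i(s)}(q)$, the canonicity of $\phi_{r_i(s)}$ on $[r_i(s),X]_{i+1}$ supplied by $\boldsymbol{A.4}^\star$ shows that $X$ does not mix $r_{i+1}(s)$ with $q$. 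Transitivity of mixing (Lemma 1) then forces $X$ to separate $r_{i+1}(s)$ from $r_{i+1}(t)$: otherwise chaining a mix of $r_{i+1}(s)$ with $r_{i+1}(t)$ together with the mix of $r_{i+1}(t)$ with $q$ would produce a mix of $r_{i+1}(s)$ with $q$, contradicting what we just observed.

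Finally, I iterate Lemma 5 item (2) along the extension chains $r_{i+1}(s)\sqsubset \dots\sqsubset s$ and $r_{i+1}(t)\sqsubset \dots\sqsubset t$ so that the separation at level $i+1$ propagates all the way to the terminal pair. Choosing $Y\leq X$ compatible with both $s$ and $t$ (available because $s,t\in \mathcal{F}\upharpoonright X$ together with the compatibility hypothesis on the space used repeatedly in the main argument) and plugging $s'=s$, $t'=t$ into the definition of separation yields $f(s)\neq f(t)$, i.e.\ $c(s)\neq c(t)$, completing the contrapositive.

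The main obstacle is the middle step, where one has to upgrade coordinatewise $\phi$-disagreement at level $i$ into an honest separation of the level-$(i+1)$ initial segments; the maneuver uses the one-step canonical dichotomy and transitivity of mixing in tandem, and relies essentially on all the preliminary work that went into producing $X$ with both properties. A minor edge case is $i+1=\min(|s|,|t|)$, so that one of $r_{i+1}(s), r_{i+1}(t)$ is itself a terminal element of $\mathcal{F}$; there the iteration in the last paragraph is vacuous, but the compatibility argument applied directly at level $i+1$ still delivers $f(s)\neq f(t)$.
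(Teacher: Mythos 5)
Your proof is correct under the standing hypotheses of the transitive case, but it runs in the opposite direction from the paper's. The paper argues forwards: from $c(s)=c(t)$ it extracts that $X$ mixes the corresponding initial segments of $s$ and $t$ at every level, and then an induction on levels (using the established equivalence ``mixing at level $l$ iff the inner maps agree there,'' together with the non-prefix lemma to rule out one of $s,t$ terminating early) shows $\phi(s\cap X(l))=\phi(t\cap X(l))$ for all $l$. You instead prove the contrapositive: the non-prefix lemma guarantees a first coordinate $i<\min(|s|,|t|)$ where $\phi(s)$ and $\phi(t)$ disagree, the ``$\phi$-agreement implies mixing'' lemma gives mixing of $r_i(s)$ with $r_i(t)$, and then the one-step dichotomy plus transitivity of mixing upgrades the coordinate-$i$ disagreement into an honest separation of $r_{i+1}(s)$ from $r_{i+1}(t)$, whence $f(s)\neq f(t)$ directly from the definition of separation. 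The two arguments lean on exactly the same ingredients (the canonical dichotomy produced before the lemmas, transitivity, and the non-prefix lemma), so neither is more general; yours is arguably cleaner in that the separation at level $i+1$ immediately yields $f(s)\neq f(t)$ without the level-by-level bookkeeping with $s\cap X(l)$ versus $t\cap X(l)$ that the paper needs, while the paper's direct induction makes more visible how the differing depths of $s$ and $t$ are absorbed. Two small cautions: your internal lemma references do not match the paper's numbering (the prefix lemma is Lemma~4, the ``$\phi(s)=\phi(t)$ implies mixing'' statement is Lemma~3, and the separation-propagation item is Lemma~2(2)), and your appeal to transitivity involves triples of possibly unequal depth, which goes beyond the literal hypothesis of Lemma~1 but is covered by the blanket assumption, in force throughout this part of the proof, that mixing is transitive --- the paper's own chaining steps make the same move.
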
                   
                       
 \begin{proof}
 Let $s,t\in \mathcal{F}$ with $c(s)=c(t)$. Then for every $l<\max (depth_X(s), depth_X(t))$, $X$ mixes $s\cap X(l)$ with $t\cap X(l)$. We show by induction that for all such an $l$ it holds that $\phi(s\cap X(l))=\phi(t\cap X(l))$. For $l=0$ $s\cap X(0)=t\cap X(0)=\emptyset$. Assume that $\phi(s\cap X(l-1))=\phi(t\cap X(l-1))$ and consider $s\cap X(l)$ and $t\cap X(l)$. Assume that $s\cap X(l)\neq \emptyset$ and $t\cap X(l)=\emptyset$. If $\phi_{s\cap X(l-1)}\neq \emptyset$ then we must have that $t\neq t\cap X(l)$. If that was the case it will contradict the above lemma since $\phi(t)=\phi(t\cap X(l))\sqsubset \phi(s)$. Notice that $X$ mixes $s\cap X(l-1)$ with $t\cap X(l-1)$, since $\phi(s\cap X(l-1))=\phi(t\cap X(l-1))$. This implies that $\phi_{s\cap X(l-1)}=\phi_{t\cap X(l-1)}$. But $s\cap X(l)\neq \emptyset$, $\phi_{s\cap X(l-1)}\neq \emptyset$ and $t\cap X(l)=\emptyset$, a contradiction.
 \end{proof}    
 
 Obviously $\phi$ is an Inner mapping. Next we prove that $\phi$ is maximal among all other mappings representing $f:\mathcal{F}\upharpoonright X\to \omega$.
 
 \begin{proposition} Suppose $Y\leq X$ and there is another $\phi'$ map, satisfying the condition that for all $t_0,t_1\in \mathcal{F}\upharpoonright Y$ $f(t_0)=f(t_1)$ if and only if $\phi'(t_0)=\phi'(t_1)$. Then there exists $Z\leq Y$ so that for every $s\in \mathcal{F}\upharpoonright Z$ $\phi'(s) \subseteq \phi(s)$.
 \end{proposition}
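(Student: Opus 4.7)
The plan is to argue by contradiction and exploit the fact that $\phi$ and $\phi'$ induce the same partition of $\mathcal{F}\upharpoonright Y$ into $f$-classes, while $\phi$ has been built to record exactly the coordinates that distinguish those classes. Concretely, Lemmas 2 and 4 give $\phi(s)=\phi(t)$ iff $f(s)=f(t)$, so on $\mathcal{F}\upharpoonright Y$ one also has $\phi(s)=\phi(t)$ iff $\phi'(s)=\phi'(t)$; what I need to upgrade is this partition-level agreement to the set-theoretic containment $\phi'(s)\subseteq\phi(s)$ on a single reduct $Z$.

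First I would reduce to a uniform statement via Proposition 1. Let $\mathcal{Q}(s,Z)$ say ``every $s'\in\mathcal{F}\upharpoonright Z$ with $s\sqsubseteq s'$ satisfies $\phi'(s')\subseteq\phi(s')$.'' It suffices to show that for each $s\in\mathcal{A}U$ and each $X'\leq Y$ there exists $Z\leq X'$ with $\mathcal{Q}(s,Z)$; Proposition 1 will then supply a single $Z\leq Y$ valid for all such $s$ simultaneously. So suppose toward a contradiction that on some $X'\leq Y$ every further reduct contains some $s'\in\mathcal{F}$ with an element $x\in\phi'(s')\setminus\phi(s')$. Since $\phi(s')=\bigcup_{s''\sqsubseteq s'}\phi_{s''}(s'(|s''|))$, the coordinate $x$ sits in a block $s'(j)$ between depths $k=depth_U(r_{j-1}(s'))$ and $l=depth_U(r_j(s'))$ with $x\notin\phi_{r_{j-1}(s')}(s'(j))$.

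Using axiom $A.4$ applied to the two-coloring of $[r_{j-1}(s'),X']_{j}$ that records whether $x$ appears in a given length-one extension, together with the space-theoretic flexibility already invoked in the proof of Theorem 1 (in particular, that one can pass to a reduct avoiding a designated length-one extension and substitute a block not containing $x$), I would construct $t\in\mathcal{F}\upharpoonright X'$ that agrees with $s'$ on every coordinate actually recorded by $\phi$ but replaces the block containing $x$ by one without $x$, so that $\phi(s')=\phi(t)$ while $x\notin\phi'(t)$. Lemma 3 then yields $f(s')=f(t)$, hence $\phi'(s')=\phi'(t)$ by the canonization hypothesis on $\phi'$, contradicting $x\in\phi'(s')\setminus\phi'(t)$.

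The main obstacle I anticipate is precisely the construction of $t$: one must perturb $s'$ at a single inner-map component so as to change the contribution to $\phi'$ without changing the contribution to $\phi$, which requires that $\phi_{r_{j-1}(s')}$ really ignores the coordinate $x$ on enough length-one extensions lying inside a common reduct. This is where the mixing/separating analysis and the Ramsey-theoretic machinery already set up for Theorem 1 become essential, and it is also where property $(\mathcal{P})$ and the substitution map $\iota$ are used when the offending coordinate arises from an $\iota$-paired position rather than a direct one.
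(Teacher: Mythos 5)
Your overall strategy is close in spirit to the paper's: both arguments rest on the fact that $\phi$ records exactly the mixing classes of length-one extensions (the coarsest data consistent with representing $f$), so any other representation $\phi'$ of the same $f$ cannot record more. However, there is a genuine gap at the start of your argument: you never normalize $\phi'$. The paper's first move, via Corollary 2 and Proposition 1, is to pass to a reduct on which $\phi'$ itself has the form of Definition 2, i.e.\ decomposes coordinatewise as $\phi'(s)\cap s(i)=g(s(i))$ for inner maps $g$. Without this step, your key sentences --- that an offending element $x\in\phi'(s')\setminus\phi(s')$ ``sits in a block $s'(j)$,'' and that after replacing that block by one avoiding $x$ the new value $\phi'(t)$ no longer contains $x$ --- are unjustified: a priori $\phi'$ is just some map whose values need not be subsets of $\cup s'$ at all, need not depend on $s'$ block by block, and could in principle still output $x$ on $t$ even though $x\notin\cup t$. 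The final contradiction ``$\phi'(s')=\phi'(t)$ yet $x\in\phi'(s')\setminus\phi'(t)$'' relies precisely on this unstated innerness of $\phi'$.

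The second weak point is the perturbation itself, which you flag as the main obstacle but leave as a sketch. You need a $t\in\mathcal{F}$ whose $j$-th block lies in the same $\phi_{r_{j-1}(s')}$-class as $s'(j)$ (so that $\phi(t)=\phi(s')$) but on which $\phi'$ drops the coordinate $x$; the existence of such a block inside a common reduct is exactly the nontrivial content. The paper sidesteps the global perturbation by working one coordinate at a time: having made $\phi'$ Inner, it two-colors the length-one extensions $[r_i(t),Y]_{i+1}$ according to whether $\phi'_{r_i(t)}(v)\subseteq\phi_{r_i(t)}(v)$, applies Corollary 1, and rules out the homogeneously-$0$ alternative because $\phi'_{r_i(t)}$ must be constant on each mixing class (otherwise $\phi'$ would separate $f$-equivalent extensions), then glues the coordinates with Proposition 1. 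If you insert the normalization step and replace your one-shot construction of $t$ by this coordinatewise coloring (or carry out the class-representative argument it encodes), your proof closes; as written, the containment $\phi'(s)\subseteq\phi(s)$ is not yet established.
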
        
 
 \begin{proof}By Corollary $2$ and Proposition $1$, we can assume that $\phi'$ has the form of Definition $2$. To see this, for any $t\in \mathcal{F}\upharpoonright X$, $i<|t|$, by Corollary $2$ there exists $X'\in [r_{i}(t),X]$ so that for every $s\supset r_{i}(t)$, $\phi'(s)\cap s(i)=g(s(i))$, for $g$ an inner map for $r_i(s)$, as in $A.4^\star$. If this is done for an arbitrary $t\in \mathcal{F}\upharpoonright X$, by Proposition $1$ we can assume that it holds for every $t\in \mathcal{F}\upharpoonright X$.
 Pick $t\in \mathcal{F}\upharpoonright Y$.

  Pick $t\in \mathcal{F}\upharpoonright Y$. Let $n=|t|$. For $i<n$ consider both $\phi'_{r_i(t)}$ and $\phi_{r_i(t)}$. Consider the two coloring $c':[r_i(t),Y]_{i+1}\to 2$, defined by
 \begin{equation*}
c'(v)=
\begin{cases}
1 & \text{ if } \phi'_{r_i(t)}(v)\subseteq \phi_{r_i(t)}(v),\\
0 & \text{ otherwise.}
\end{cases}
\end{equation*}

There exists $Z'\leq [r_i(t),Y]$ so that $c'\upharpoonright [r_i(t),Z']_{i+1}$ is constant and equal to one. Observe that we can only have for every extension $v$ of $r_i(t)$, so that $r_i(t)\cup v\in [r_i(t),Z']_{i+1}$, $\phi'_{r_i(t)}(v)\subseteq \phi_{r_i(t)}(v)$. This is due to the fact that both $\phi'$ and $\phi$ witness the same $f\upharpoonright (\mathcal{F}\upharpoonright Y)$. By Proposition $1$, there exists $Z\leq Z'$ that satisfies the conclusions of our proposition.
 
 \end{proof}  
 Therefore $\phi$ satisfies the conditions of Definition $2$. Then the proof of Theorem $1$ is complete in the case of transitive mixing.
 
 \subsection{non-transitive mixing}
 
 Now we deal with the case that mixing is not transitive. Lemma $1$ shows that in the case of equal depth, mixing is guaranteed to be transitive. As its proof points out, in this case the mixing is forced to take place on the "tail" of $X$. The counter examples indicates that there are topological Ramsey spaces where the mixing is not necessarily taking place on the "tail" of $X$. 
 This necessitates the following definition.
  
 \begin{definition} Let $f:\mathcal{F}\to \omega$, where $\mathcal{F}$ is a front on $X$. For $s,t\in \hat{\mathcal{F}}$ we say that $X$ weakly mixes $s$ with $t$, if $depth_X(s)<depth_X(t)$ and there exist $ w_s^t\in \mathcal{L}X$, $w_s^t\in t\setminus s$, so that $t'\cup w_s^t  \sqsubseteq t$, such that for every $Y\leq X$, $[s,Y]\neq \emptyset$, $[t,Y]\neq \emptyset$, there exists $\bar{s},\bar{t}\in \mathcal{F}\upharpoonright Y$ where $s \sqsubseteq \bar{s}$, $\bar{s}(|s|)\in \langle w_s^t,v\rangle_s$, $v\in \mathcal{L}X$, $t\sqsubseteq \bar{t}$ and $f(\bar{s})=f(\bar{t})$. 

\end{definition}

Notice that if $X$ weakly mixes $s$ with $t$, then $X$ mixes $s$ with $t$. Axiom $A.2$ guarantees that the cardinality of the set $ \langle w_s^t,v\rangle_s$ is finite for any $v\in \mathcal{L}X$.

 A topological Ramsey space is said to have {\it{full initial segments}} if any $s\in \mathcal{A}U_m$, $m\in \omega$, $s\leq_{fin}r_l(U)$, for some $l\geq m$, then $s$ is actually made out of $\cup_{n\in [1,l)}U(n)$. In other words we have that any $s\in \mathcal{A}U$, with $depth_U(s)=l$ is made out of $\cup_{n\in [1,l)}U(n)$. Not a level of $U$ below $l$ can be missed.
 Next we claim the following.

\begin{claim}Weak mixing is taking place on topological Ramsey spaces where the end extensions can be made out of more than one level and do not have full initial segments.
\end{claim}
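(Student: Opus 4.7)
The plan is to show that in any topological Ramsey space whose length-one end extensions can span several non-consecutive levels of $U$ and whose initial segments are not forced to use every lower level, one can exhibit a coloring and a pair $s,t \in \hat{\mathcal{F}}\setminus \mathcal{F}$ for which the definition of weak mixing is witnessed. The prototype is the $FIN^{[\infty]}$ counterexample already displayed in the excerpt: there the colouring $c(x_0,x_1)=x_0\cup x_1$, the triple $s=x_0$, $t=x_0\cup x_2$, $t'=x_0\cup x_1\cup x_2$ has $depth_X(s)<depth_X(t)$, and the block $w_s^t=x_2$ (detached from level $1$) is exactly the piece playing the role required in Definition $4$.

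First, I would fix an abstract $\langle U,\leq,r\rangle$ satisfying the two hypotheses of the claim, and choose $s\in \mathcal{A}U_1$ with $depth_U(s)=k$ together with an end extension of the form $t=s\cup w\in \mathcal{A}U_2$ whose second block $w$ is drawn from $U[l,m)$ with $l>k+1$, so that at least one level $U(j)$ with $k<j<l$ is skipped; hypothesis (ii), the failure of full initial segments, is exactly what licenses this. The front would be $\mathcal{F}=\mathcal{A}U_2$ and the colouring $f:\mathcal{F}\to \omega$ would be the natural ``join'' map encoding the underlying combinatorial data of both level blocks (the analogue of $x_0\cup x_1$); any space of the kind described admits such a join, since hypothesis (i) gives a way of amalgamating several level blocks into a single extension in $\mathcal{L}U$.

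Next I would set $w_s^t:=w\in \mathcal{L}X$ so that $s\cup w_s^t\sqsubseteq t$, and verify the quantifier structure of Definition $4$ directly. Given any $Y\leq X$ with $[s,Y]\neq\emptyset$ and $[t,Y]\neq\emptyset$, hypothesis (i) provides some $v\in \mathcal{L}X$ drawn from $Y$'s levels such that $\langle w_s^t,v\rangle_s$ is a non-empty set of length-one extensions of $s$ living in $Y$; one picks $\bar s\in \mathcal{F}\upharpoonright Y$ with $\bar s(|s|)\in \langle w_s^t,v\rangle_s$ and $\bar t\in \mathcal{F}\upharpoonright Y$ extending $t$ in such a way that the two amalgamations carry the same join data, hence $f(\bar s)=f(\bar t)$. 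This is precisely the situation of weak but not plain mixing, because along the ``tail'' of $Y$ (i.e.\ throwing away the skipped levels) $s$ and $t$ are separated; the mixing is only visible when one is allowed to combine $w_s^t$ with a further block $v$.

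The main obstacle, I expect, is isolating a clean abstract replacement for the disjoint-union operation that makes the join colouring well defined in a general space; the paper's notation $\langle w_0,\dots,w_n\rangle_s$ together with the partial ordering on $\mathcal{L}X$ is tailored to exactly this, and hypothesis (i) guarantees the set is non-trivial while hypothesis (ii) guarantees the relevant extension of $s$ can really omit intermediate levels. I would finish by noting that conversely, if the space has full initial segments, then the depths of $s$ and $t$ determine the levels used unambiguously, forcing any mixing witnessed between $s$ and $t$ to be tail-based and hence (by Lemma $1$) transitive; so the two hypotheses of the claim are the natural obstruction to transitivity.
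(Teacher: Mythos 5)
Your proposal proves (or rather sketches) the converse of what the paper actually establishes. You read the claim as an existence statement --- ``on spaces with multi-level extensions and without full initial segments, one can exhibit a coloring witnessing weak mixing'' --- and you try to build such a coloring abstractly from the $FIN^{[\infty]}$ prototype. The paper's proof goes in the opposite direction: it shows that weak mixing can \emph{only} occur when both hypotheses hold, by refuting weak mixing under the negation of each one separately. Concretely, (i) if end extensions are made out of a single level, the paper colors each length-one extension $w$ by whether $\phi_w=\emptyset$, passes to a homogeneous reduct $Y$ via Corollary~1, and argues that on $Y$ the configurations producing weak mixing (an $s$ with $\phi_s\neq\emptyset$ weakly mixed with a $w'$ having $\phi_{w'}=\emptyset$) no longer occur, then iterates over fronts $\mathcal{A}X_n$ by induction on $n$; (ii) if the space has full initial segments, then every length-one extension $s'$ of $s$ automatically satisfies $s'(|s|)\in\langle w_s^t,v\rangle_s$, so the witness $w_s^t$ imposes no restriction and weak mixing collapses to ordinary mixing. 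Your closing sentence gestures at (ii), but you omit (i) entirely, and (i) is the half that actually requires an argument (the coloring-and-reduct step).

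Even granting your reading of the statement, the proposal has a genuine gap that you yourself flag: the ``join'' coloring $f$ that is supposed to encode the combinatorial data of both blocks is not defined in an abstract topological Ramsey space, and there is no reason a general space satisfying the two hypotheses admits such a map, let alone one for which the resulting mixing of $s$ with $t$ is weak but not plain. The $FIN^{[\infty]}$ example works because the semigroup operation $x_0\cup x_1$ is available there; nothing in axioms $A.1$--$A.4$ or $A.4^\star$ supplies an analogue. So the existence direction cannot be carried out at this level of generality, which is presumably why the paper only proves the necessity direction. I would redo the proof as two separate reductions to the negations of the hypotheses, following the coloring arguments above.
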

\begin{proof}Suppose that $\langle U, \leq, r\rangle$ has the property that end extensions are made out of only one level.  Let $\mathcal{F}=\mathcal{A}X_2$, $X\leq U$, $s\in \mathcal{A}X_1$ and $\phi_s\neq \emptyset$. Assume also that  for a fixed $w'\in \mathcal{A}(X/s)_1$, $w'\in \langle w'' \rangle_{\emptyset}$, $\phi_{ w'}=\emptyset$ and $f(s\cup w'')=f(w'\cup v)$. As a consequence $f(s\cup w'')=f(w' \cup v)$, for every $v $ so that $w'\cup v\in \mathcal{F}\upharpoonright X$. 
Observe that any $Y\leq X$, so that $s\cup w''\in \mathcal{A}Y$, $Y$ mixes $s$ with $w'$, but $\phi_s\neq \phi_{w'}$. In fact in this case the mixing of $s$ and $ w'$ is already decided from the $depth_X(s\cup w'')$, and is irrelevant to the "tail" of $X$. According to Definition $4$ $X$ weakly mixes $s$ with $w'$ and $w_s^{w'}=w'$, $v=w''\setminus w' $, $v\notin \mathcal{L}X/w'$. Observe that the above happens cause $\phi_{w'}=\emptyset$. Let $c:\mathcal{A}X_1\to 2$ defined by 
\begin{equation*}
c(w)=
\begin{cases}
1 & \text{ if } \phi_w=\emptyset ,\\
0 & \text{ otherwise.}
\end{cases}
\end{equation*}

There exists $Y\leq X$ so that $c\upharpoonright \mathcal{A}Y_1$ is constant. On $Y$ instances of $s$ and $w'$ as above do not occur. In other words we have that $v\in \mathcal{L}X/w'$ in Definition $4$.
Let $F$ be a front of the form $\mathcal{A}X_n$, for $n\in \omega$. Then by induction on $n$ and a coloring as above, we can also assume that there are no $s$, $w'$ getting weakly mixed by $X$. Therefore we can assume that for any front of the form $\mathcal{A}X_n$, $n\in \omega$, weak mixing does not occur.
 
 Next suppose that $\langle U, \leq, r\rangle$ has full initial segments. If $X$ weakly mixes $s$ with $t$, $X\leq U$, then $X$ actually mixes $s$ with $t$. To see this, let $w_s^t$ witness that $X$ weakly mixes $s$ with $t$. Then any end extension $s'$ of $s$ is so that $s'(|s|)\in \langle w_s^t,v \rangle_s$. Therefore $X$ mixes $s$ with $t$. 

\end{proof}
 
 We observe the following. 

\begin{claim} If $s$ is weakly mixed by $X$ with $t$ and $t$ is also weakly mixed with $p$, then $X$ weakly mixes $s$ with $p$ as well and $w_s^t\subseteq w_s^p$. Similarly in the case that $X$ weakly mixes $s$ with $t$, mixes, not weakly, $t$ with $p$, then $X$ weakly mixes $s$ with $p$ and $w_s^t\subseteq w_s^p$. 

\end{claim}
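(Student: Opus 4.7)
The plan is to establish transitivity of weak mixing by composing the two witnesses and carrying out a Ramsey-theoretic alignment of the matching colors. Set $w_s^p := w_s^t \cup w_t^p$, viewed via axiom $A.2$ as an element of $\mathcal{L}X$ contained in $p \setminus s$; then $w_s^t \subseteq w_s^p$ is automatic, and $depth_X(s) < depth_X(p)$ follows from the chain $depth_X(s) < depth_X(t) < depth_X(p)$. Fix any $Y \leq X$ with $[s,Y], [p,Y] \neq \emptyset$ (so also $[t,Y] \neq \emptyset$, since $t \sqsubseteq p$).

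First, apply the weak mixing of $t$ with $p$ on $Y$ to obtain $\bar{t}_0, \bar{p}_0 \in \mathcal{F} \upharpoonright Y$ with $t \sqsubseteq \bar{t}_0$, $p \sqsubseteq \bar{p}_0$, $\bar{t}_0(|t|) \in \langle w_t^p, v_1 \rangle_t$, and $f(\bar{t}_0) = f(\bar{p}_0)$. The critical observation is that $\bar{p}_0$ \emph{itself} extends $t$ (through $p$), so it is a legitimate candidate for the ``$\bar{t}$'' in a subsequent application of weak mixing of $s$ with $t$. Pick $Y' \in [\bar{p}_0, Y]$; then $Y' \leq Y$ is compatible with both $s$ and $t$, and $\bar{p}_0 \in \mathcal{F} \upharpoonright Y'$.

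Now define a two-coloring $c : \mathcal{F} \upharpoonright Y' \to 2$ by setting $c(\bar{t}) = 1$ exactly when $t \sqsubseteq \bar{t}$ and there exists $\bar{s} \in \mathcal{F} \upharpoonright Y'$ with $s \sqsubseteq \bar{s}$, $\bar{s}(|s|) \in \langle w_s^t, v \rangle_s$, and $f(\bar{s}) = f(\bar{t})$. By the Ramsey property for fronts (Corollary $1$ iterated as in Proposition $1$, while keeping $\bar{p}_0$ as an initial approximation), there exists $Z \in [\bar{p}_0, Y']$ on which $c \upharpoonright \mathcal{F} \upharpoonright Z$ is constant. The constant cannot be $0$: the weak mixing of $s$ with $t$, applied on $Z$, immediately exhibits some $\bar{t} \in \mathcal{F} \upharpoonright Z$ with $c(\bar{t}) = 1$. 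Hence $c \equiv 1$ on $\mathcal{F} \upharpoonright Z$, and applied to $\bar{p}_0$ this yields $\bar{s}$ with $s \sqsubseteq \bar{s}$, $\bar{s}(|s|) \in \langle w_s^t, v \rangle_s$, and $f(\bar{s}) = f(\bar{p}_0)$. Since $w_s^t \subseteq w_s^p$, one rewrites $\bar{s}(|s|)$ as lying in $\langle w_s^p, v' \rangle_s$ by absorbing the remaining part of $w_s^p$ into $v'$, producing the desired witnessing pair $(\bar{s}, \bar{p}_0)$ for weak mixing of $s$ with $p$ via $w_s^p$.

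The second variant of the claim (where $X$ mixes $t$ with $p$ without the weak-mixing structure) follows the same outline after replacing the first step's weak mixing by the plain mixing definition to extract $\bar{t}_0, \bar{p}_0$ with $f(\bar{t}_0) = f(\bar{p}_0)$; the rest of the argument is unchanged. The main obstacle is the Ramsey homogenization of $c$ while simultaneously preserving $\bar{p}_0$ as an initial approximation of $Z$, which requires carefully iterating Proposition $1$'s diagonalization inside $[\bar{p}_0, Y']$. Once this is in place, the hypothesis that $s$ is weakly mixed with $t$ forces the constant value to be $1$, and the matching propagates along the chain $s \to t \to p$ as required.
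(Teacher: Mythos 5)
There is a genuine gap at the final step. Having produced $\bar{s}$ with $\bar{s}(|s|)\in\langle w_s^t,v\rangle_s$ and $f(\bar{s})=f(\bar{p}_0)$, you set $w_s^p:=w_s^t\cup w_t^p$ and ``rewrite'' $\bar{s}(|s|)$ as an element of $\langle w_s^p,v'\rangle_s$ by ``absorbing the remaining part of $w_s^p$ into $v'$''. This is not legitimate: by the paper's definition, membership in $\langle w_0,\dots,w_n\rangle_s$ means the end extension is made out of \emph{all} of $w_0,\dots,w_n$ (``not a proper subset is sufficient''), so $\bar{s}(|s|)\in\langle w_s^t\cup w_t^p,v'\rangle_s$ would force $w_t^p$ to actually occur in, and be needed for, $\bar{s}(|s|)$. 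There is no reason for that: $w_t^p$ lies in $p\setminus t$, while the extension $\bar{s}(|s|)$ supplied by the weak mixing of $s$ with $t$ need not involve it at all. The witness you want is simply a set containing $w_s^t$ itself (which already lies in $p\setminus s$ since $t\sqsubseteq p$), not the union with $w_t^p$.

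A second, more structural issue: for each $Y$ you exhibit a single matching pair $(\bar{s},\bar{p}_0)$ whose $s$-side passes through $w_s^t$, but you never rule out that $s$ and $p$ are \emph{also} mixed through extensions of $s$ avoiding $w_s^t$; that is exactly what the containment $w_s^t\subseteq w_s^p$ is meant to exclude. The paper closes this hole with a contradiction argument you omit: if $s$ and $p$ were mixed but not weakly through $w_s^t$, then for every $v$ one finds $v'$ with $p\cup v$ mixed with $s\cup v'$ and with a one-step extension $t'$ of $t$, whence $s\cup v'$ is mixed with $t'$ for an arbitrary $v'$, contradicting that $s$ and $t$ are only \emph{weakly} mixed. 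Finally, a smaller repair: your coloring is defined on all of $\mathcal{F}\upharpoonright Y'$, where the value $1$ is unattainable on elements not extending $t$, so constant value $1$ is impossible as stated; you should color $\mathcal{F}_t\upharpoonright Y'$, or, as the paper does, work only with the one-step extension sets $[t,X]_{|t|+1}$ and $[p,X]_{|p|+1}$, which suffices and is directly covered by Corollary 1 without any delicate homogenization of a coloring of a full front relative to the fixed stem $\bar{p}_0$.
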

\begin{proof}In the first case, by definition, we have that $depth_X(s)< depth_X(t) < depth_X(p)$. Suppose $X$ weakly mixes $s$ with $t$, then for $w^t_s$ we have that for every $Y\leq X$, compatible with both $s,t$, there exists $\bar{s},\bar{t}\in \mathcal{F}\upharpoonright Y$ where $s \sqsubseteq \bar{s}$, $\bar{s}(|s|)\in \langle w_s^t,v\rangle_s$ and $t\sqsubseteq \bar{t}$ and $f(\bar{s})=f(\bar{t})$. Consider the coloring $c:[t,X]_{|t|+1}\to 2$ defined by 
\begin{equation*}
c(t\cup v)=
\begin{cases}
1 & \text{ if } X \text{ mixes } t\cup v \text{ with } s'\sqsupseteq s , |s'|=|s|+1, s'(|s|) \in \langle w_s^t,v' \rangle_s, \\
0 & \text{ otherwise.}
\end{cases}
\end{equation*}

There is $X_0\leq X$ so that $c\upharpoonright [t,X_0]_{|t|+1}=1$. By a similar coloring we get a further reduct $X_1\leq X_0$, $w^p_t$, so that every $p\cup v\in [p,X_1]_{|p|+1}$ is weakly mixed by $X_1$ with $t'\sqsupseteq t$ so that $|t'|=|t|+1$, 
$ t'(|t|) \in \langle w_t^p,v'' \rangle_t$. Then $X_1$ mixes $s$ with $p$. If $X_1$ mixes, but doesn't mixes weakly, $p$ with $s$, then we would have that for every $v\in  X_1/p $ there exists $v'\in  X_1/p $ so that $X_1$ mixes $p\cup v$ with $s\cup v'$ and $p\cup v$ with $t'\sqsupseteq t$, $|t'|=|t|+1$. As a result $X$ mixes $s\cup v'$ with $t'$, a contradiction. Therefore there exists $w^p_s$ so that for every $v\in  X_1/p  $, there exists $s', t'$, where $X_1$ mixes $p\cup v$ with $s'\sqsupseteq s$, $|s'|=|s|+1$, $s'(|s|) \in \langle w_s^p,v'\rangle_s$ and $p\cup v$ with $t'\sqsupseteq t$, $|t'|=|t|+1$, $t'(|t|)\in \langle w_t^p, v''\rangle_t $. This implies that $X$ mixes $s'$ with $t'$ which implies that $w^t_s\subseteq w_s^p$.

Now in the case that $X$ mixes, but not weakly, $t$ with $p$, then $X$ mixes $s$ with $p$. If $s$ and $p$ are mixed by $X$, not weakly, then we can assume that for every $v\in  X_1/p  $, there exists $s\cup v'\in [s,X]_{|s|+1}, t\cup v''\in [t,X]_{|t|+1}$ so that $X$ mixes $p\cup v$ with both $t\cup v''$ and $s\cup v'$, contradicting that $s$, $t$ are weakly mixed. Therefore there exists $w^p_s$ so that for every $v\in  X_1/p $, there exists $t\cup v''\in [t,X]_{|t|+1}$, so that $X_1$ mixes $p\cup v$ with $s'\sqsupseteq s$, $s'(|s|) \in \langle w_s^p,v\rangle_s$, and $p\cup v$ with $t\cup v''$. As a result $X_1$ mixes $s'$ with $t\cup v''$ as well. This implies that $w_s^t\subseteq w_s^p$.
\end{proof}

 The above claim shows the following. Let $s,t\in \hat{ \mathcal{F}}\setminus \mathcal{F}$ so that are weakly mixed by $X$, i.e. there is $w_s^t$ and extensions $s \sqsubset \bar{s}$ and $t\sqsubseteq \bar{t}$ satisfying Definition $4$, $\bar{s},\bar{t}\in \mathcal{F}$, so that $f(\bar{s})=f(\bar{t})$. Any $p\in \hat{ \mathcal{F}}\setminus \mathcal{F}$ where there exists $p\sqsubseteq \bar{p}$ with $f(\bar{s})=f(\bar{t})=f(\bar{p})$, is so that $w_s^t\subseteq p\setminus s$ as well. Let $s=r_n(X)$, for some $X\leq U$, observe that $w_s^t$ is in the second part of $s\cup v=\bar{s}$, but on $t,p$ wich form the first part of $\bar{p}$ and $\bar{t}$. As a consequence $f$ factors through $\langle \rangle_{}$ non trivially. Therefore the only way to ruin transitivity is through the $\langle \rangle_{}$ operation. In this case the map $\phi$ in the Definition $2$ is so that for at least one of its components the $\langle \rangle$ is non trivial.

 \end{proof}
 
 Consider any topological Ramsey space $\langle U,\leq,r \rangle$ that satisfied the strengthened version $A.4^\star$, property $\mathcal{P}$ and for any $s,t\in \mathcal{A}U_n$ it holds that, for any $s\cup w\in [s,U]_{n+1}$, where $w$ is made out of $\cup_{n\in \mathcal{X}}U(n)$, $\mathcal{X}\subseteq [k,l)$, there exists $t\cup v\in [t,U]_{n+1}$ with $v$ made out of exactly the same levels $\cup_{n\in \mathcal{X}}U(n)$. Then given any front $\mathcal{F}$ on $X\leq U$ and $f:\mathcal{F}\to \omega$, there exists $Y\leq X$ and an Inner map $\phi$ on $\mathcal{F}\upharpoonright Y$, so that for every $s,t \in \mathcal{F}\upharpoonright Y$ it holds that $f(s)=f(t)$ if and only if $\phi(s)=\phi(t)$. In the case of non-transitive mixing, the above assertion holds for any front of the form $\mathcal{A}Y_n$, $n\in \omega$.

     \end{document}